\newtheorem{theorem}{Theorem}
\newtheorem{proposition}{Proposition}
\newcommand{\pointfont}[1]{\mathit{#1}}
\newcommand{\ptA}{\pointfont{A}}
\newcommand{\ptB}{\pointfont{B}}
\newcommand{\ptC}{\pointfont{C}}
\newcommand{\ptD}{\pointfont{D}}
\newcommand{\ptE}{\pointfont{E}}
\newcommand{\ptF}{\pointfont{F}}
\newcommand{\ptDA}{\ptD_{\!\ptA}}
\newcommand{\ptEA}{\ptE_{\!\ptA}}
\newcommand{\ptFA}{\ptF_{\!\ptA}}
\newcommand{\rA}{{r_{\!\ptA}}}
\newcommand{\rB}{{r_{\ptB}}}
\newcommand{\rC}{{r_{\ptC}}}
\newcommand{\alphaA}{{\alpha_{\!\ptA}}}
\newcommand{\alphaB}{{\alpha_{\ptB}}}
\newcommand{\alphaC}{{\alpha_{\ptC}}}
\newcommand{\betaA}{{\beta_{\!\ptA}}}
\newcommand{\betaB}{{\beta_{\ptB}}}
\newcommand{\betaC}{{\beta_{\ptC}}}
\newcommand{\gammaA}{{\gamma_{\!\ptA}}}
\newcommand{\gammaB}{{\gamma_{\ptB}}}
\newcommand{\gammaC}{{\gamma_{\ptC}}}
\newcommand{\sigmaA}{{\sigma_{\!\ptA}}}
\newcommand{\sigmaB}{{\sigma_{\ptB}}}
\newcommand{\sigmaC}{{\sigma_{\ptC}}}
\title{Schellbach-style Formulae for the Derousseau-Pampuch Generalizations of the Malfatti Circles}
\author{Hiroyasu Kamo\thanks{Nara Women's University, Nara, Japan; email: \texttt{wd@ics.nara-wu.ac.jp}}}
\begin{document}
\maketitle
\begin{abstract}
It is known that
there exist 32 triplets of circles
such that each circle is tangent to the other two circles
and to two of the sides of the triangle or their extensions.
We provide formulae
to obtain the radii of the circles
for each of the 32 triplets
from the side lengths of the reference triangle
by means of trigonometric or hyperbolic functions.
\end{abstract}
\section{Introduction}
The configuration of
three circles inside a triangle such that each circle is tangent to
the other two circles and to two of the sides of the triangle
has been studied
for more than two centuries.
Today,
such three circles are called the \emph{Malfatti circles}
of the triangle.
\begin{figure}[htbp]
\begin{center}
\includegraphics{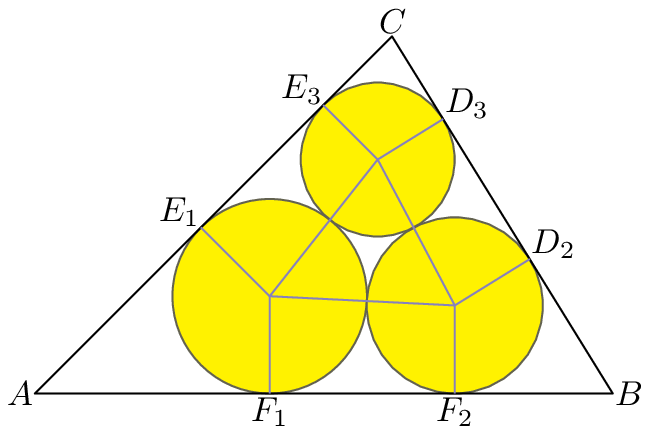}
\caption{}
\end{center}
\end{figure}

Sometime before 1773,
Naonobu Ajima (1732?--1798),
who was a samurai, or a member of the military class in old Japan,
found a method to calculate the diameters of the Malfatti circles
from the side lengths of an arbitrary triangle.
The method was called
\textit{Nanzan-shi san-sha naiy\=o san-en jutsu}
(``\mbox{Nanzan's} method on a triangle that includes three circles'',
as Nanzan is a pen name of Ajima's)
or \textit{San-sha san-en jutsu} in short.
A brief description of the method
is found in \cite[I \P14]{fukyu-sanpo}.
A detailed description of
the method including a proof is found in \cite{sansha-sanen}.
Unfortunately, Ajima's method
as well as any other results by Japanese mathematicians in those days
was inaccessible from outside Japan
until the Edo shogunate, the former government of Japan (1603--1868),
abandoned the isolation policy in 1854.

In 1803,
an Italian mathematician Gianfrancesco Malfatti (1731--1807)
\cite{Malfatti1803} gave
a construction to draw the Malfatti circles
for an arbitrary triangle.
Despite Malfatti's unawareness of Ajima's works,
Malfatti's construction is considered identical in many parts
to Ajima's method.


In 1852,
Schellbach \cite{Schellbach1853a}\cite{Schellbach1853b} gave a set of formulae
to obtain the distances between the vertices and the tangent points of the circles on the sides
from the side lengths of an arbitrary triangle
by using trigonometric functions.
The same formulae with a proof essentially identical to Schellbach's
are described in English in \cite[\S30]{Dorrie1965}\cite{Edmunds1881}.

In 1895,
Derousseau \cite{Derousseau1895} generalized the Malfatti circles
by removing the condition
that the three circles are inside the triangle.
Derousseau proved that
there exist 32 triplets of circles
such that each circle is tangent to
the other two circles
and to two of the sides of the reference triangle or their extensions.
Some alternative proofs of the existence are known.
In 1904,
Pampuch \cite{Pampuch1904} gave another proof.
In 1930, Lob and Richmond \cite{Lob&Richmond1930}
gave yet another proof.

In this article,
we provide formulae
to obtain the radii of the circles
for each of the 32 triplets
from the side lengths
by means of trigonometric or hyperbolic functions.
In other words,
we provide Schellbach-style formulae
for all of the Derousseau-Pampuch generalizations.

\section{Notation}
Throughout this article, we use the following notation.

For a triangle $\ptA\ptB\ptC$,
let $a$, $b$, $c$ denote the lengths of the sides $\ptB\ptC$, $\ptC\ptA$, $\ptA\ptB$,
$s$ the semiperimeter,
$r$ the inradius,
and $\rA$, $\rB$, $\rC$ the exradii
as usual.

Let the incircle is tangent to the side $\ptB\ptC$ at $\ptD$,
\begin{figure}[htbp]
\begin{center}
\includegraphics{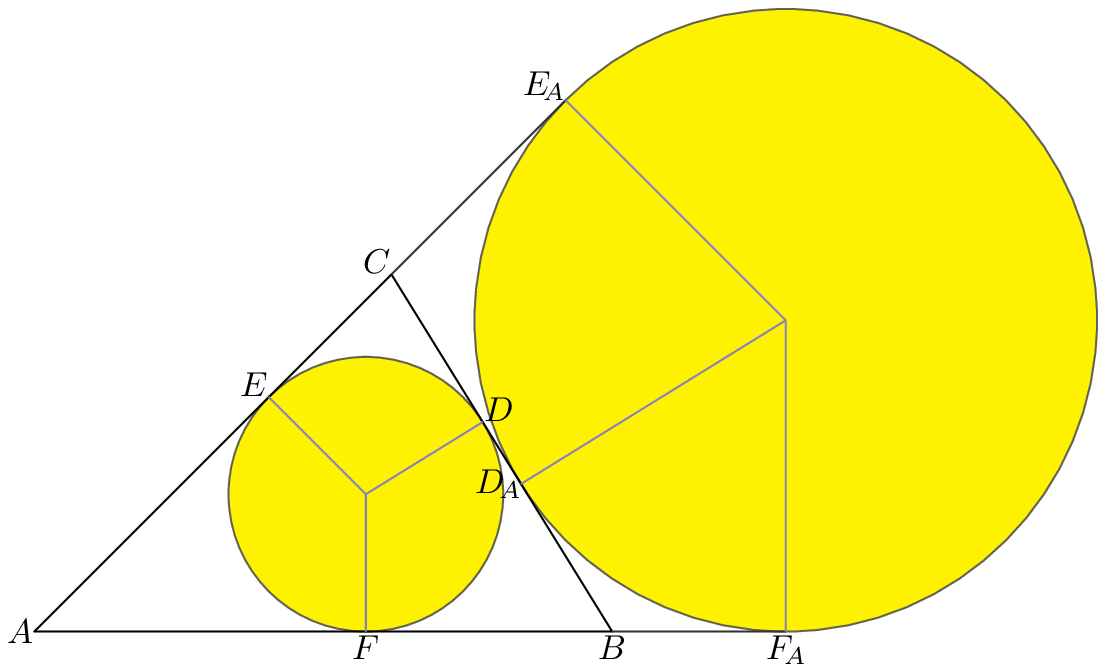}
\caption{}
\end{center}
\end{figure}
to the side $\ptC\ptA$ at $\ptE$,
and to the side $\ptA\ptB$ at $\ptF$.
Let the excircle corresponding to the vertex $\ptA$ is tangent
to the side $\ptB\ptC$ at $\ptDA$,
to the extension of the side $\ptA\ptC$ at $\ptEA$,
and to the extension of the side $\ptA\ptB$ at $\ptFA$.

Suppose the circle $\ptA'(r_1)$ is tangent to the line $\ptC\ptA$ at $\ptE_1$
and to the line $\ptA\ptB$ at $\ptF_1$,
the circle $\ptB'(r_2)$ is tangent to the line $\ptA\ptB$ at $\ptF_2$
and to the line $\ptB\ptC$ at $\ptD_2$,
the circle $\ptC'(r_3)$ is tangent to the line $\ptB\ptC$ at $\ptD_3$
and to the line $\ptC\ptA$ at $\ptE_3$,
and the three circles are tangent to one another.
Suppose the nine tangent points are distinct.

\section{Classification}
\label{sec:classify}
Since the center $\ptA'$ does not locate neither on the line $\ptA\ptB$ nor on the line $\ptA\ptC$,
it locates
inside $\angle\ptC\ptA\ptB$,
inside $\angle\overline{\ptC}\ptA\ptB$,
inside $\angle\ptC\ptA\overline{\ptB}$ or
inside $\angle\overline{\ptC}\ptA\overline{\ptB}$
where
an overline indicates that the angle has, as one of its sides,
the opposite ray instead of the ray including the triangle side.
For example, $\angle\ptC\ptA\overline{\ptB}$ denotes
the angle with the ray $\overrightarrow{\ptA\ptC}$
and the ray opposite to the ray $\overrightarrow{\ptB}$.
And $\angle\overline{\ptC}\ptA\overline{\ptB}$ denotes
the vertical angle of $\angle\ptC\ptA\ptB$.
Analogously,
the center $\ptB'$ locates
inside $\angle\ptA\ptB\ptC$,
inside $\angle\overline{\ptA}\ptB\ptC$,
inside $\angle\ptA\ptB\overline{\ptC}$ or
inside $\angle\overline{\ptA}\ptB\overline{\ptC}$
and
the center $\ptC'$ locates
inside $\angle\ptB\ptC\ptA$,
inside $\angle\overline{\ptB}\ptC\ptA$,
inside $\angle\ptB\ptC\overline{\ptA}$ or
inside $\angle\overline{\ptB}\ptC\overline{\ptA}$.

If the circles $\ptA'(r_1)$, $\ptB'(r_2)$, $\ptC'(r_3)$ lie
in $\varDelta_1$, $\varDelta_2$, $\varDelta_3$, respectively,
then ${\varDelta_1 \cap \varDelta_2} \not= \emptyset$,\quad
${\varDelta_1 \cap \varDelta_3} \not= \emptyset$,
and ${\varDelta_2 \cap \varDelta_3} \not= \emptyset$
since the three circles are tangent to one another.
Thus, for locations of the three centers $\ptA'$, $\ptB'$, $\ptC'$,
only $7$ out of the $64$ cases are consistent
to the condition that the circles are tangent to one another.
The following are the consistent cases.
\begin{quote}
\begin{tabular}{lccc}
& $\ptA'$ is inside & $\ptB'$ is inside & $\ptC'$ is inside
\\
Case~1
& $\angle\ptC\ptA\ptB$
& $\angle\ptA\ptB\ptC$
& $\angle\ptB\ptC\ptA$
\\
Case~2
& $\angle\ptC\ptA\ptB$
& $\angle\overline{\ptA}\ptB\ptC$
& $\angle\ptB\ptC\overline{\ptA}$
\\
Case~3
& $\angle\overline{\ptC}\ptA\overline{\ptB}$
& $\angle\ptA\ptB\overline{\ptC}$
& $\angle\overline{\ptB}\ptC\ptA$
\\
Case~4
& $\angle\ptC\ptA\overline{\ptB}$
& $\angle\ptA\ptB\ptC$
& $\angle\overline{\ptB}\ptC\ptA$
\\
Case~5
& $\angle\overline{\ptC}\ptA\ptB$
& $\angle\overline{\ptA}\ptB\overline{\ptC}$
& $\angle\ptB\ptC\overline{\ptA}$
\\
Case~6
& $\angle\overline{\ptC}\ptA\ptB$
& $\angle\ptA\ptB\overline{\ptC}$
& $\angle\ptB\ptC\ptA$
\\
Case~7
& $\angle\ptC\ptA\overline{\ptB}$
& $\angle\overline{\ptA}\ptB\ptC$
& $\angle\overline{\ptB}\ptC\overline{\ptA}$
\end{tabular}
\end{quote}

\section{Solution}
\subsection{Case 1}
\label{sec:case_1}

In Case~1, the following three conditions hold.
\begin{gather*}
\ptB\ptD_2+\ptD_3\ptC+\ptD_2\ptD_3 = \ptB\ptD + \ptD\ptC
\quad\text{or}\quad
\ptB\ptD_2+\ptD_3\ptC-\ptD_2\ptD_3 = \ptB\ptD + \ptD\ptC
,
\\
\ptA\ptE_1+\ptE_3\ptC+\ptE_1\ptE_3 = \ptA\ptE + \ptE\ptC
\quad\text{or}\quad
\ptA\ptE_1+\ptE_3\ptC-\ptE_1\ptE_3 = \ptA\ptE + \ptE\ptC
,
\\
\ptA\ptF_1+\ptF_2\ptB+\ptF_1\ptF_2 = \ptA\ptF + \ptF\ptB
\quad\text{or}\quad
\ptA\ptF_1+\ptF_2\ptB-\ptF_1\ptF_2 = \ptA\ptF + \ptF\ptB
.
\end{gather*}
By expressing the lengths by the radii and the angle sizes,
we obtain from the first disjunction that
\begin{gather}
r_2\cot\frac{B}{2} + r_3\cot\frac{C}{2} + 2\sqrt{r_2r_3} = r\cot\frac{B}{2}+r\cot\frac{C}{2}
\label{eq:in.BC+}
\shortintertext{or}
r_2\cot\frac{B}{2} + r_3\cot\frac{C}{2} - 2\sqrt{r_2r_3} = r\cot\frac{B}{2}+r\cot\frac{C}{2}
\label{eq:in.BC-}
,
\end{gather}
we obtain from the second disjunction that
\begin{gather}
r_1\cot\frac{A}{2} + r_3\cot\frac{C}{2} + 2\sqrt{r_1r_3} = r\cot\frac{A}{2}+r\cot\frac{C}{2}
\label{eq:in.AC+}
\shortintertext{or}
r_1\cot\frac{A}{2} + r_3\cot\frac{C}{2} - 2\sqrt{r_1r_3} = r\cot\frac{A}{2}+r\cot\frac{C}{2}
\label{eq:in.AC-}
,
\end{gather}
and we obtain from the second disjunction that
\begin{gather}
r_1\cot\frac{A}{2} + r_2\cot\frac{B}{2} + 2\sqrt{r_1r_2} = r\cot\frac{A}{2}+r\cot\frac{B}{2}
\label{eq:in.AB+}
\shortintertext{or}
r_1\cot\frac{A}{2} + r_2\cot\frac{B}{2} - 2\sqrt{r_1r_2} = r\cot\frac{A}{2}+r\cot\frac{B}{2}
\label{eq:in.AB-}
.
\end{gather}

Define $l$, $m$, $n$ by
\begin{align}
l &= \cot\frac{A}{2}
,&
m &= \cot\frac{B}{2}
,&
n &= \cot\frac{C}{2}
.
\label{eq:in.lmnABC}
\end{align}
Define $u$, $v$, $w$, $x$, $y$, $z$ by
\begin{gather*}
u =
\begin{dcases}
 \frac{\sqrt{r_2r_3}}{r} & \text{if \eqref{eq:in.BC+} holds,}
\\
-\frac{\sqrt{r_2r_3}}{r} & \text{if \eqref{eq:in.BC-} holds,}
\end{dcases}
\displaybreak[0]\\
v =
\begin{dcases}
 \frac{\sqrt{r_1r_3}}{r} & \text{if \eqref{eq:in.AC+} holds,}
\\
-\frac{\sqrt{r_1r_3}}{r} & \text{if \eqref{eq:in.AC-} holds,}
\end{dcases}
\displaybreak[0]\\
w =
\begin{dcases}
 \frac{\sqrt{r_1r_2}}{r} & \text{if \eqref{eq:in.AB+} holds,}
\\
-\frac{\sqrt{r_1r_2}}{r} & \text{if \eqref{eq:in.AB-} holds,}
\end{dcases}
\displaybreak[0]\\
x = \frac{r_1}{r},
\qquad
y = \frac{r_2}{r},
\qquad
z = \frac{r_3}{r}.
\end{gather*}
Then we have
\begin{equation}
\left\{
\begin{aligned}
my+nz+2u &= m+n,
\\
lx+nz+2v &= l+n,
\\
lx+my+2w &= l+m,
\\
xy &= w^2,
\\
xz &= v^2,
\\
yz &= u^2.
\end{aligned}
\right.
\label{eq:in}
\end{equation}

For any triangle $\ptA\ptB\ptC$,
if $l$, $m$, $n$ are defined by \eqref{eq:in.lmnABC},
then $lmn={l+m+n}$ holds.
On the other hand,
if positive reals $l$, $m$, $n$ satisfy $lmn={l+m+n}$,
then there exists a triangle $\ptA\ptB\ptC$
that satisfies \eqref{eq:in.lmnABC}.
Thus,
Case~1 can be reduced into solving the system of equations \eqref{eq:in}
for $u$, $v$, $w$, $x$, $y$, $z$
with positive real parameters $l$, $m$, $n$
under the restriction $lmn={l+m+n}$.

As we will show in Appendix~\ref{sec:how-to-solve-1},
the system of equations has the following $8$ solutions.
\begin{gather}
\left\{
\begin{aligned}
u&=\frac{\sqrt{l^2+1}-l+1}{2}
,\\
v&=\frac{\sqrt{m^2+1}-m+1}{2}
,\\
w&=\frac{\sqrt{n^2+1}-n+1}{2}
,\\
x&=\frac{l+m+n-1+\sqrt{l^2+1}-\sqrt{m^2+1}-\sqrt{n^2+1}}{2l}
,\\
y&=\frac{l+m+n-1-\sqrt{l^2+1}+\sqrt{m^2+1}-\sqrt{n^2+1}}{2m}
,\\
z&=\frac{l+m+n-1-\sqrt{l^2+1}-\sqrt{m^2+1}+\sqrt{n^2+1}}{2n}
.
\end{aligned}
\right.
\label{lmn:i1}
\displaybreak[0]\\
\left\{
\begin{aligned}
u&=\frac{\sqrt{l^2+1}-l+1}{2}
,\\
v&=-\frac{\sqrt{m^2+1}+m-1}{2}
,\\
w&=-\frac{\sqrt{n^2+1}+n-1}{2}
,\\
x&=\frac{l+m+n-1+\sqrt{l^2+1}+\sqrt{m^2+1}+\sqrt{n^2+1}}{2l}
,\\
y&=\frac{l+m+n-1-\sqrt{l^2+1}-\sqrt{m^2+1}+\sqrt{n^2+1}}{2m}
,\\
z&=\frac{l+m+n-1-\sqrt{l^2+1}+\sqrt{m^2+1}-\sqrt{n^2+1}}{2n}
.
\end{aligned}
\right.
\label{lmn:i2}
\displaybreak[0]\\
\left\{
\begin{aligned}
u&=-\frac{\sqrt{l^2+1}+l-1}{2}
,\\
v&=\frac{\sqrt{m^2+1}-m+1}{2}
,\\
w&=-\frac{\sqrt{n^2+1}+n-1}{2}
,\\
x&=\frac{l+m+n-1-\sqrt{l^2+1}-\sqrt{m^2+1}+\sqrt{n^2+1}}{2l}
,\\
y&=\frac{l+m+n-1+\sqrt{l^2+1}+\sqrt{m^2+1}+\sqrt{n^2+1}}{2m}
,\\
z&=\frac{l+m+n-1+\sqrt{l^2+1}-\sqrt{m^2+1}-\sqrt{n^2+1}}{2n}
.
\end{aligned}
\right.
\label{lmn:i3}
\displaybreak[0]\\
\left\{
\begin{aligned}
u&=-\frac{\sqrt{l^2+1}+l-1}{2}
,\\
v&=-\frac{\sqrt{m^2+1}+m-1}{2}
,\\
w&=\frac{\sqrt{n^2+1}-n+1}{2}
,\\
x&=\frac{l+m+n-1-\sqrt{l^2+1}+\sqrt{m^2+1}-\sqrt{n^2+1}}{2l}
,\\
y&=\frac{l+m+n-1+\sqrt{l^2+1}-\sqrt{m^2+1}-\sqrt{n^2+1}}{2m}
,\\
z&=\frac{l+m+n-1+\sqrt{l^2+1}+\sqrt{m^2+1}+\sqrt{n^2+1}}{2n}
.
\end{aligned}
\right.
\label{lmn:i4}
\displaybreak[0]\\
\left\{
\begin{aligned}
u&=-\frac{\sqrt{l^2+1}+l+1}{2}
,\\
v&=-\frac{\sqrt{m^2+1}+m+1}{2}
,\\
w&=-\frac{\sqrt{n^2+1}+n+1}{2}
,\\
x&=\frac{l+m+n+1-\sqrt{l^2+1}+\sqrt{m^2+1}+\sqrt{n^2+1}}{2l}
,\\
y&=\frac{l+m+n+1+\sqrt{l^2+1}-\sqrt{m^2+1}+\sqrt{n^2+1}}{2m}
,\\
z&=\frac{l+m+n+1+\sqrt{l^2+1}+\sqrt{m^2+1}-\sqrt{n^2+1}}{2n}
.
\end{aligned}
\right.
\label{lmn:i5}
\displaybreak[0]\\
\left\{
\begin{aligned}
u&=-\frac{\sqrt{l^2+1}+l+1}{2}
,\\
v&=\frac{\sqrt{m^2+1}-m-1}{2}
,\\
w&=\frac{\sqrt{n^2+1}-n-1}{2}
,\\
x&=\frac{l+m+n+1-\sqrt{l^2+1}-\sqrt{m^2+1}-\sqrt{n^2+1}}{2l}
,\\
y&=\frac{l+m+n+1+\sqrt{l^2+1}+\sqrt{m^2+1}-\sqrt{n^2+1}}{2m}
,\\
z&=\frac{l+m+n+1+\sqrt{l^2+1}-\sqrt{m^2+1}+\sqrt{n^2+1}}{2n}
.
\end{aligned}
\right.
\label{lmn:i6}
\displaybreak[0]\\
\left\{
\begin{aligned}
u&=\frac{\sqrt{l^2+1}-l-1}{2}
,\\
v&=-\frac{\sqrt{m^2+1}+m+1}{2}
,\\
w&=\frac{\sqrt{n^2+1}-n-1}{2}
,\\
x&=\frac{l+m+n+1+\sqrt{l^2+1}+\sqrt{m^2+1}-\sqrt{n^2+1}}{2l}
,\\
y&=\frac{l+m+n+1-\sqrt{l^2+1}-\sqrt{m^2+1}-\sqrt{n^2+1}}{2m}
,\\
z&=\frac{l+m+n+1-\sqrt{l^2+1}+\sqrt{m^2+1}+\sqrt{n^2+1}}{2n}
.
\end{aligned}
\right.
\label{lmn:i7}
\displaybreak[0]\\
\left\{
\begin{aligned}
u&=\frac{\sqrt{l^2+1}-l-1}{2}
,\\
v&=\frac{\sqrt{m^2+1}-m-1}{2}
,\\
w&=-\frac{\sqrt{n^2+1}+n+1}{2}
,\\
x&=\frac{l+m+n+1+\sqrt{l^2+1}-\sqrt{m^2+1}+\sqrt{n^2+1}}{2l}
,\\
y&=\frac{l+m+n+1-\sqrt{l^2+1}+\sqrt{m^2+1}+\sqrt{n^2+1}}{2m}
,\\
z&=\frac{l+m+n+1-\sqrt{l^2+1}-\sqrt{m^2+1}-\sqrt{n^2+1}}{2n}
.
\end{aligned}
\right.
\label{lmn:i8}
\end{gather}

Define $\alpha,\beta,\gamma\in(0,\pi/2)$ and $\sigma$ by
\begin{align*}
\sin^2\alpha &= \frac{a}{s}
,&
\sin^2\beta &= \frac{b}{s}
,&
\sin^2\gamma &= \frac{c}{s}
,&
\sigma &= \frac{\alpha+\beta+\gamma}{2}
.
\end{align*}

The fourth equation in \eqref{lmn:i1} corresponds to a value of $r_1$ as follows.
\[
r_1=\frac{r\bigl(l+m+n-1+\sqrt{l^2+1}-\sqrt{m^2+1}-\sqrt{n^2+1}\bigr)}{2l}.
\]
Since
\begin{gather*}
r = \sqrt{\frac{(s-a)(s-b)(s-c)}{s}}
,\\
l = \frac{s-a}{r} = \frac{s}{\rA}
,\qquad
m = \frac{s-b}{r} = \frac{s}{\rB}
,\qquad
n = \frac{s-c}{r} = \frac{s}{\rC}
,
\end{gather*}
it holds that
\begin{align*}
&
\frac{r\bigl(l+m+n-1+\sqrt{l^2+1}-\sqrt{m^2+1}-\sqrt{n^2+1}\bigr)}{2l}
\\
&=
\frac{\rA}{2}
\mathopen{}\left(
1
-\sqrt{\frac{(s-a)(s-b)(s-c)}{s^3}}
+\sqrt{\frac{(s-a)bc}{s^3}}
-\sqrt{\frac{a(s-b)c}{s^3}}
-\sqrt{\frac{ab(s-c)}{s^3}}
\right)\mathclose{}
\\
&=
\frac{\rA}{2}
(
1
-\cos\alpha\cos\beta\cos\gamma
+\cos\alpha\sin\beta\sin\gamma
-\sin\alpha\cos\beta\sin\gamma
-\sin\alpha\sin\beta\cos\gamma
)
\\
&=
\frac{\rA(1-\cos(\beta+\gamma-\alpha))}{2}
\\
&=
\rA\sin^2(\sigma-\alpha)
.
\end{align*}

By making similar calculations on
every last three equations in
\eqref{lmn:i1},
\eqref{lmn:i2},
\eqref{lmn:i3},
\eqref{lmn:i4},
\eqref{lmn:i5},
\eqref{lmn:i6},
\eqref{lmn:i7} and
\eqref{lmn:i8},
we obtain the following respective solutions in Case~1.
\begin{gather}
\left\{
\begin{aligned}
r_1 &=
\rA\sin^2(\sigma-\alpha)
,\\
r_2 &=
\rB\sin^2(\sigma-\beta)
,\\
r_3 &=
\rC\sin^2(\sigma-\gamma)
.
\end{aligned}
\right.
\label{schellbach:i1}
\displaybreak[0]\\
\left\{
\begin{aligned}
r_1 &=
\rA\sin^2\sigma
,\\
r_2 &=
\rB\sin^2(\sigma-\gamma)
,\\
r_3 &=
\rC\sin^2(\sigma-\beta)
.
\end{aligned}
\right.
\label{schellbach:i2}
\displaybreak[0]\\
\left\{
\begin{aligned}
r_1 &=
\rA\sin^2(\sigma-\gamma)
,\\
r_2 &=
\rB\sin^2\sigma
,\\
r_3 &=
\rC\sin^2(\sigma-\alpha)
.
\end{aligned}
\right.
\label{schellbach:i3}
\displaybreak[0]\\
\left\{
\begin{aligned}
r_1 &=
\rA\sin^2(\sigma-\beta)
,\\
r_2 &=
\rB\sin^2(\sigma-\alpha)
,\\
r_3 &=
\rC\sin^2\sigma
.
\end{aligned}
\right.
\label{schellbach:i4}
\displaybreak[0]\\
\left\{
\begin{aligned}
r_1 &=
\rA\cos^2(\sigma-\alpha)
,\\
r_2 &=
\rB\cos^2(\sigma-\beta)
,\\
r_3 &=
\rC\cos^2(\sigma-\gamma)
.
\end{aligned}
\right.
\label{schellbach:i5}
\displaybreak[0]\\
\left\{
\begin{aligned}
r_1 &=
\rA\cos^2\sigma
,\\
r_2 &=
\rB\cos^2(\sigma-\gamma)
,\\
r_3 &=
\rC\cos^2(\sigma-\beta)
.
\end{aligned}
\right.
\label{schellbach:i6}
\displaybreak[0]\\
\left\{
\begin{aligned}
r_1 &=
\rA\cos^2(\sigma-\gamma)
,\\
r_2 &=
\rB\cos^2\sigma
,\\
r_3 &=
\rC\cos^2(\sigma-\alpha)
.
\end{aligned}
\right.
\label{schellbach:i7}
\displaybreak[0]\\
\left\{
\begin{aligned}
r_1 &=
\rA\cos^2(\sigma-\beta)
,\\
r_2 &=
\rB\cos^2(\sigma-\alpha)
,\\
r_3 &=
\rC\cos^2\sigma
.
\end{aligned}
\right.
\label{schellbach:i8}
\end{gather}

\subsection{Cases 2 \& 3}
\label{sec:case_23}

In Case~2, we have that the following three disjunctions of equations hold.
\begin{gather*}
\ptB\ptD_2+\ptD_3\ptC+\ptD_2\ptD_3 = \ptB\ptDA + \ptDA\ptC
\quad\text{or}\quad
\ptB\ptD_2+\ptD_3\ptC-\ptD_2\ptD_3 = \ptB\ptDA + \ptDA\ptC
,
\\
\ptA\ptE_1-\ptC\ptE_3+\ptE_1\ptE_3 = \ptA\ptEA - \ptC\ptEA
\quad\text{or}\quad
\ptA\ptE_1-\ptC\ptE_3-\ptE_1\ptE_3 = \ptA\ptEA - \ptC\ptEA
,
\\
\ptA\ptF_1-\ptB\ptF_2+\ptF_1\ptF_2 = \ptA\ptFA - \ptB\ptFA
\quad\text{or}\quad
\ptA\ptF_1-\ptB\ptF_2-\ptF_1\ptF_2 = \ptA\ptFA - \ptB\ptFA
.
\end{gather*}
By expressing the lengths by the radii and the angle sizes,
we obtain from the first disjunction that
\begin{gather}
r_2\tan\frac{B}{2} + r_3\tan\frac{C}{2} + 2\sqrt{r_2r_3} = \rA\tan\frac{B}{2}+\rA\tan\frac{C}{2}
\label{eq:exA.BC+}
\shortintertext{or}
r_2\tan\frac{B}{2} + r_3\tan\frac{C}{2} - 2\sqrt{r_2r_3} = \
rA\tan\frac{B}{2}+\rA\tan\frac{C}{2}
\label{eq:exA.BC-}
,
\end{gather}
we obtain from the second disjunction that
\begin{gather}
r_1\cot\frac{A}{2} - r_3\tan\frac{C}{2} + 2\sqrt{r_1r_3} = \rA\cot\frac{A}{2}-\rA\tan\frac{C}{2}
\label{eq:exA.AC+}
\shortintertext{or}
r_1\cot\frac{A}{2} - r_3\tan\frac{C}{2} - 2\sqrt{r_1r_3} = \rA\cot\frac{A}{2}-\rA\tan\frac{C}{2}
\label{eq:exA.AC-}
,
\end{gather}
and we obtain from the third disjunction that
\begin{gather}
r_1\cot\frac{A}{2} - r_2\tan\frac{B}{2} + 2\sqrt{r_1r_2} = \rA\cot\frac{A}{2}-\rA\tan\frac{B}{2}
\label{eq:exA.AB+}
\shortintertext{or}
r_1\cot\frac{A}{2} - r_2\tan\frac{B}{2} - 2\sqrt{r_1r_2} = \rA\cot\frac{A}{2}-\rA\tan\frac{B}{2}
\label{eq:exA.AB-}
.
\end{gather}

Define $l$, $\bar{m}$, $\bar{n}$ by
\begin{align}
l &= \cot\frac{A}{2}
,&
\bar{m} &= \tan\frac{B}{2}
,&
\bar{n} &= \tan\frac{C}{2}
.
\label{eq:exA.lmnABC}
\end{align}
Define $u$, $v$, $w$, $x$, $y$, $z$ by
\begin{gather*}
u =
\begin{dcases}
-\frac{\sqrt{r_2r_3}}{\rA} & \text{if \eqref{eq:exA.BC+} holds,}
\\
 \frac{\sqrt{r_2r_3}}{\rA} & \text{if \eqref{eq:exA.BC-} holds,}
\end{dcases}
\displaybreak[0]\\
v =
\begin{dcases}
 \frac{\sqrt{r_1r_3}}{\rA} & \text{if \eqref{eq:exA.AC+} holds,}
\\
-\frac{\sqrt{r_1r_3}}{\rA} & \text{if \eqref{eq:exA.AC-} holds,}
\end{dcases}
\displaybreak[0]\\
w =
\begin{dcases}
 \frac{\sqrt{r_1r_2}}{\rA} & \text{if \eqref{eq:exA.AB+} holds,}
\\
-\frac{\sqrt{r_1r_2}}{\rA} & \text{if \eqref{eq:exA.AB-} holds,}
\end{dcases}
\displaybreak[0]\\
x = \frac{r_1}{\rA},
\qquad
y = \frac{r_2}{\rA},
\qquad
z = \frac{r_3}{\rA}.
\end{gather*}
Then we have
\begin{equation}
\left\{
\begin{aligned}
\bar{m}y+\bar{n}z-2u &= \bar{m}+\bar{n},
\\
lx-\bar{n}z+2v &= l-\bar{n},
\\
lx-\bar{m}y+2w &= l-\bar{m},
\\
xy &= w^2,
\\
xz &= v^2,
\\
yz &= u^2.
\end{aligned}
\right.
\label{eq:exA}
\end{equation}

In Case~3, we have
\begin{gather*}
-\ptB\ptD_2-\ptD_3\ptC+\ptD_2\ptD_3 = \ptB\ptDA + \ptDA\ptC
,
\\
-\ptA\ptE_1+\ptC\ptE_3+\ptE_1\ptE_3 = \ptA\ptEA - \ptC\ptEA
\;\text{or}\;
-\ptA\ptE_1+\ptC\ptE_3-\ptE_1\ptE_3 = \ptA\ptEA - \ptC\ptEA
,
\\
-\ptA\ptF_1+\ptB\ptF_2+\ptF_1\ptF_2 = \ptA\ptFA - \ptB\ptFA
\;\text{or}\;
-\ptA\ptF_1+\ptB\ptF_2-\ptF_1\ptF_2 = \ptA\ptFA - \ptB\ptFA
.
\end{gather*}
By expressing the lengths by the radii and the angle sizes,
we obtain from the first equation that
\begin{equation}
-r_2\tan\frac{B}{2} - r_3\tan\frac{C}{2} + 2\sqrt{r_2r_3} = \rA\tan\frac{B}{2}+\rA\tan\frac{C}{2}
\label{eq:exA*.BC+}
,
\end{equation}
we obtain form the second conjunction that
\begin{gather}
-r_1\cot\frac{A}{2} + r_3\tan\frac{C}{2} + 2\sqrt{r_1r_3} = \rA\cot\frac{A}{2}-\rA\tan\frac{C}{2}
\label{eq:exA*.AC+}
\shortintertext{or}
-r_1\cot\frac{A}{2} + r_3\tan\frac{C}{2} - 2\sqrt{r_1r_3} = \rA\cot\frac{A}{2}-\rA\tan\frac{C}{2}
\label{eq:exA*.AC-}
,
\end{gather}
and we obtain from the third conjunction that
\begin{gather}
-r_1\cot\frac{A}{2} + r_2\tan\frac{B}{2} + 2\sqrt{r_1r_2} = \rA\cot\frac{A}{2}-\rA\tan\frac{B}{2}
\label{eq:exA*.AB+}
\shortintertext{or}
-r_1\cot\frac{A}{2} + r_2\tan\frac{B}{2} - 2\sqrt{r_1r_2} = \rA\cot\frac{A}{2}-\rA\tan\frac{B}{2}
\label{eq:exA*.AB-}
.
\end{gather}

Define $l$, $\bar{m}$, $\bar{n}$ by \eqref{eq:exA.lmnABC}.
Define $u$, $v$, $w$, $x$, $y$, $z$ by
\begin{gather*}
u =
-\frac{\sqrt{r_2r_3}}{\rA}
\displaybreak[0]\\
v =
\begin{dcases}
 \frac{\sqrt{r_1r_3}}{\rA} & \text{if \eqref{eq:exA*.AC+} holds,}
\\
-\frac{\sqrt{r_1r_3}}{\rA} & \text{if \eqref{eq:exA*.AC-} holds,}
\end{dcases}
\displaybreak[0]\\
w =
\begin{dcases}
 \frac{\sqrt{r_1r_2}}{\rA} & \text{if \eqref{eq:exA*.AB+} holds,}
\\
-\frac{\sqrt{r_1r_2}}{\rA} & \text{if \eqref{eq:exA*.AB-} holds,}
\end{dcases}
\displaybreak[0]\\
x = -\frac{r_1}{\rA},
\qquad
y = -\frac{r_2}{\rA},
\qquad
z = -\frac{r_3}{\rA}.
\end{gather*}
Then we have the same system of equations as \eqref{eq:exA}.

For any triangle $\ptA\ptB\ptC$,
if $l$, $\bar{m}$, $\bar{n}$ are defined by \eqref{eq:exA.lmnABC},
then $l\bar{m}\bar{n}={l-\bar{m}-\bar{n}}$ holds.
On the other hand,
if positive reals $l$, $\bar{m}$, $\bar{n}$ satisfy $l\bar{m}\bar{n}={l-\bar{m}-\bar{n}}$,
then there exists a triangle $\ptA\ptB\ptC$
that satisfies \eqref{eq:exA.lmnABC}.
Thus,
Case~2 and Case~3 can be unified and reduced into solving the system of equations \eqref{eq:exA}
for $u$, $v$, $w$, $x$, $y$, $z$
with positive real parameters $l$, $\bar{m}$, $\bar{n}$
under the restriction $l\bar{m}\bar{n}={l-\bar{m}-\bar{n}}$.

As we will show in Appendix~\ref{sec:how-to-solve-2}.
the system of equations has the following $8$ solutions.
\begin{gather}
\left\{
\begin{aligned}
u&=\frac{\sqrt{l^2+1}-l-1}{2}
,\\
v&=\frac{\sqrt{\bar{m}^2+1}+\bar{m}-1}{2}
,\\
w&=\frac{\sqrt{\bar{n}^2+1}+\bar{n}-1}{2}
,\\
x&=\frac{\sqrt{l^2+1}-\sqrt{\bar{m}^2+1}-\sqrt{\bar{n}^2+1}+l-\bar{m}-\bar{n}+1}{2l}
,\\
y&=\frac{\sqrt{l^2+1}-\sqrt{\bar{m}^2+1}+\sqrt{\bar{n}^2+1}-l+\bar{m}+\bar{n}-1}{2\bar{m}}
,\\
z&=\frac{\sqrt{l^2+1}+\sqrt{\bar{m}^2+1}-\sqrt{\bar{n}^2+1}-l+\bar{m}+\bar{n}-1}{2\bar{n}}
.
\end{aligned}
\right.
\label{lmn:a1}
\displaybreak[0]\\
\left\{
\begin{aligned}
u&=\frac{\sqrt{l^2+1}-l-1}{2}
,\\
v&=-\frac{\sqrt{\bar{m}^2+1}-\bar{m}+1}{2}
,\\
w&=-\frac{\sqrt{\bar{n}^2+1}-\bar{n}+1}{2}
,\\
x&=\frac{\sqrt{l^2+1}+\sqrt{\bar{m}^2+1}+\sqrt{\bar{n}^2+1}+l-\bar{m}-\bar{n}+1}{2l}
,\\
y&=\frac{\sqrt{l^2+1}+\sqrt{\bar{m}^2+1}-\sqrt{\bar{n}^2+1}-l+\bar{m}+\bar{n}-1}{2\bar{m}}
,\\
z&=\frac{\sqrt{l^2+1}-\sqrt{\bar{m}^2+1}+\sqrt{\bar{n}^2+1}-l+\bar{m}+\bar{n}-1}{2\bar{n}}
.
\end{aligned}
\right.
\label{lmn:a2}
\displaybreak[0]\\
\left\{
\begin{aligned}
u&=\frac{\sqrt{l^2+1}-l+1}{2}
,\\
v&=-\frac{\sqrt{\bar{m}^2+1}-\bar{m}-1}{2}
,\\
w&=\frac{\sqrt{\bar{n}^2+1}+\bar{n}+1}{2}
,\\
x&=\frac{\sqrt{l^2+1}+\sqrt{\bar{m}^2+1}-\sqrt{\bar{n}^2+1}+l-\bar{m}-\bar{n}-1}{2l}
,\\
y&=\frac{\sqrt{l^2+1}+\sqrt{\bar{m}^2+1}+\sqrt{\bar{n}^2+1}-l+\bar{m}+\bar{n}+1}{2\bar{m}}
,\\
z&=\frac{\sqrt{l^2+1}-\sqrt{\bar{m}^2+1}-\sqrt{\bar{n}^2+1}-l+\bar{m}+\bar{n}+1}{2\bar{n}}
.
\end{aligned}
\right.
\label{lmn:a3}
\displaybreak[0]\\
\left\{
\begin{aligned}
u&=\frac{\sqrt{l^2+1}-l+1}{2}
,\\
v&=\frac{\sqrt{\bar{m}^2+1}+\bar{m}+1}{2}
,\\
w&=-\frac{\sqrt{\bar{n}^2+1}-\bar{n}-1}{2}
,\\
x&=\frac{\sqrt{l^2+1}-\sqrt{\bar{m}^2+1}+\sqrt{\bar{n}^2+1}+l-\bar{m}-\bar{n}-1}{2l}
,\\
y&=\frac{\sqrt{l^2+1}-\sqrt{\bar{m}^2+1}-\sqrt{\bar{n}^2+1}-l+\bar{m}+\bar{n}+1}{2\bar{m}}
,\\
z&=\frac{\sqrt{l^2+1}+\sqrt{\bar{m}^2+1}+\sqrt{\bar{n}^2+1}-l+\bar{m}+\bar{n}+1}{2\bar{n}}
.
\end{aligned}
\right.
\label{lmn:a4}
\displaybreak[0]\\
\left\{
\begin{aligned}
u&=-\frac{\sqrt{l^2+1}+l-1}{2}
,\\
v&=-\frac{\sqrt{\bar{m}^2+1}-\bar{m}-1}{2}
,\\
w&=-\frac{\sqrt{\bar{n}^2+1}-\bar{n}-1}{2}
,\\
x&=-\frac{\sqrt{l^2+1}-\sqrt{\bar{m}^2+1}-\sqrt{\bar{n}^2+1}-l+\bar{m}+\bar{n}+1}{2l}
,\\
y&=-\frac{\sqrt{l^2+1}-\sqrt{\bar{m}^2+1}+\sqrt{\bar{n}^2+1}+l-\bar{m}-\bar{n}-1}{2\bar{m}}
,\\
z&=-\frac{\sqrt{l^2+1}+\sqrt{\bar{m}^2+1}-\sqrt{\bar{n}^2+1}+l-\bar{m}-\bar{n}-1}{2\bar{n}}
.
\end{aligned}
\right.
\label{lmn:a5}
\displaybreak[0]\\
\left\{
\begin{aligned}
u&=-\frac{\sqrt{l^2+1}+l-1}{2}
,\\
v&=\frac{\sqrt{\bar{m}^2+1}+\bar{m}+1}{2}
,\\
w&=\frac{\sqrt{\bar{n}^2+1}+\bar{n}+1}{2}
,\\
x&=-\frac{\sqrt{l^2+1}+\sqrt{\bar{m}^2+1}+\sqrt{\bar{n}^2+1}-l+\bar{m}+\bar{n}+1}{2l}
,\\
y&=-\frac{\sqrt{l^2+1}+\sqrt{\bar{m}^2+1}-\sqrt{\bar{n}^2+1}+l-\bar{m}-\bar{n}-1}{2\bar{m}}
,\\
z&=-\frac{\sqrt{l^2+1}-\sqrt{\bar{m}^2+1}+\sqrt{\bar{n}^2+1}+l-\bar{m}-\bar{n}-1}{2\bar{n}}
.
\end{aligned}
\right.
\label{lmn:a6}
\displaybreak[0]\\
\left\{
\begin{aligned}
u&=-\frac{\sqrt{l^2+1}+l+1}{2}
,\\
v&=\frac{\sqrt{\bar{m}^2+1}+\bar{m}-1}{2}
,\\
w&=-\frac{\sqrt{\bar{n}^2+1}-\bar{n}+1}{2}
,\\
x&=-\frac{\sqrt{l^2+1}+\sqrt{\bar{m}^2+1}-\sqrt{\bar{n}^2+1}-l+\bar{m}+\bar{n}-1}{2l}
,\\
y&=-\frac{\sqrt{l^2+1}+\sqrt{\bar{m}^2+1}+\sqrt{\bar{n}^2+1}+l-\bar{m}-\bar{n}+1}{2\bar{m}}
,\\
z&=-\frac{\sqrt{l^2+1}-\sqrt{\bar{m}^2+1}-\sqrt{\bar{n}^2+1}+l-\bar{m}-\bar{n}+1}{2\bar{n}}
.
\end{aligned}
\right.
\label{lmn:a7}
\displaybreak[0]\\
\left\{
\begin{aligned}
u&=-\frac{\sqrt{l^2+1}+l+1}{2}
,\\
v&=-\frac{\sqrt{\bar{m}^2+1}-\bar{m}+1}{2}
,\\
w&=\frac{\sqrt{\bar{n}^2+1}+\bar{n}-1}{2}
,\\
x&=-\frac{\sqrt{l^2+1}-\sqrt{\bar{m}^2+1}+\sqrt{\bar{n}^2+1}-l+\bar{m}+\bar{n}-1}{2l}
,\\
y&=-\frac{\sqrt{l^2+1}-\sqrt{\bar{m}^2+1}-\sqrt{\bar{n}^2+1}+l-\bar{m}-\bar{n}+1}{2\bar{m}}
,\\
z&=-\frac{\sqrt{l^2+1}+\sqrt{\bar{m}^2+1}+\sqrt{\bar{n}^2+1}+l-\bar{m}-\bar{n}+1}{2\bar{n}}
.
\end{aligned}
\right.
\label{lmn:a8}
\end{gather}

Define $\alphaA,\betaA,\gammaA\in(0,+\infty)$ and $\sigmaA$ by
\begin{gather*}
\sinh^2\alphaA = \frac{a}{s-a}
,\qquad
\sinh^2\betaA = \frac{s-c}{s-a}
,\qquad
\sinh^2\gammaA = \frac{s-b}{s-a}
,\\
\sigmaA = \frac{\alphaA+\betaA+\gammaA}{2}
.
\end{gather*}

The fourth equation in \eqref{lmn:a1} corresponds to a value of $r_1$ as follows.
\[
r_1=\frac{\rA\bigl(\sqrt{l^2+1}-\sqrt{\bar{m}^2+1}-\sqrt{\bar{n}^2+1}+l-\bar{m}-\bar{n}+1\bigr)}{2l}
\]
Since
\begin{gather*}
\rA = \sqrt{\frac{s(s-b)(s-c)}{s-a}}
,\\
l = \frac{s}{\rA} = \frac{s-a}{r}
,\qquad
\bar{m} = \frac{s-c}{\rA} = \frac{s-a}{\rC}
,\qquad
\bar{n} = \frac{s-b}{\rA} = \frac{s-a}{\rB}
,
\end{gather*}
it holds that
\begin{align*}
&
\frac{\rA\bigl(\sqrt{l^2+1}-\sqrt{\bar{m}^2+1}-\sqrt{\bar{n}^2+1}+l-\bar{m}-\bar{n}+1\bigr)}{2l}
\\
&=
\frac{r}{2}
\Biggl(
\sqrt{\frac{bcs}{(s-a)^3}}
-\sqrt{\frac{ab(s-b)}{(s-a)^3}}
-\sqrt{\frac{ac(s-c)}{(s-a)^3}}
+\sqrt{\frac{s(s-b)(s-c)}{(s-a)^3}}
+1
\Biggr)
\\
&=
\frac{r}{2}
(
\cosh\alphaA\cosh\betaA\cosh\gammaA
-\sinh\alphaA\cosh\betaA\sinh\gammaA
\\
&\phantom{=\frac{r}{2}(}\;
-\sinh\alphaA\sinh\betaA\cosh\gammaA
+\cosh\alphaA\sinh\betaA\sinh\gammaA
+1
)
\\
&=
\frac{\rA(\cosh(\betaA+\gammaA-\alphaA)+1)}{2}
\\
&=
\rA\sinh^2(\sigmaA-\alphaA)
.
\end{align*}

By making similar calculations on
every last three equations in
\eqref{lmn:a1},
\eqref{lmn:a2},
\eqref{lmn:a3},
\eqref{lmn:a4},
\eqref{lmn:a5},
\eqref{lmn:a6},
\eqref{lmn:a7} and
\eqref{lmn:a8},
we obtain the following respective solutions in Cases~2 and~3.
\begin{gather}
\left\{
\begin{aligned}
r_1 &=
r\cosh^2(\sigmaA-\alphaA)
,\\
r_2 &=
\rC\sinh^2(\sigmaA-\betaA)
,\\
r_3 &=
\rB\sinh^2(\sigmaA-\gammaA)
.
\end{aligned}
\right.
\label{schellbach:a1}
\displaybreak[0]\\
\left\{
\begin{aligned}
r_1 &=
r\cosh^2\sigmaA
,\\
r_2 &=
\rC\sinh^2(\sigmaA-\gammaA)
,\\
r_3 &=
\rB\sinh^2(\sigmaA-\betaA)
.
\end{aligned}
\right.
\label{schellbach:a2}
\displaybreak[0]\\
\left\{
\begin{aligned}
r_1 &=
r\cosh^2(\sigmaA-\gammaA)
,\\
r_2 &=
\rC\sinh^2\sigmaA
,\\
r_3 &=
\rB\sinh^2(\sigmaA-\alphaA)
.
\end{aligned}
\right.
\label{schellbach:a3}
\displaybreak[0]\\
\left\{
\begin{aligned}
r_1 &=
r\cosh^2(\sigmaA-\betaA)
,\\
r_2 &=
\rC\sinh^2(\sigmaA-\alphaA)
,\\
r_3 &=
\rB\sinh^2\sigmaA
.
\end{aligned}
\right.
\label{schellbach:a4}
\displaybreak[0]\\
\left\{
\begin{aligned}
r_1 &=
r\sinh^2(\sigmaA-\alphaA)
,\\
r_2 &=
\rC\cosh^2(\sigmaA-\betaA)
,\\
r_3 &=
\rB\cosh^2(\sigmaA-\gammaA)
.
\end{aligned}
\right.
\label{schellbach:a5}
\displaybreak[0]\\
\left\{
\begin{aligned}
r_1 &=
r\sinh^2\sigmaA
,\\
r_2 &=
\rC\cosh^2(\sigmaA-\gammaA)
,\\
r_3 &=
\rB\cosh^2(\sigmaA-\betaA)
.
\end{aligned}
\right.
\label{schellbach:a6}
\displaybreak[0]\\
\left\{
\begin{aligned}
r_1 &=
r\sinh^2(\sigmaA-\gammaA)
,\\
r_2 &=
\rC\cosh^2\sigmaA
,\\
r_3 &=
\rB\cosh^2(\sigmaA-\alphaA)
.
\end{aligned}
\right.
\label{schellbach:a7}
\displaybreak[0]\\
\left\{
\begin{aligned}
r_1 &=
r\sinh^2(\sigmaA-\betaA)
,\\
r_2 &=
\rC\cosh^2(\sigmaA-\alphaA)
,\\
r_3 &=
\rB\cosh^2\sigmaA
.
\end{aligned}
\right.
\label{schellbach:a8}
\end{gather}

\subsection{Cases 4 \& 5}
Define $\alphaB,\betaB,\gammaB\in(0,+\infty)$ and $\sigmaB$ by
\begin{gather*}
\sinh^2\alphaB = \frac{s-c}{s-b}
,\qquad
\sinh^2\betaB = \frac{b}{s-b}
,\qquad
\sinh^2\gammaB = \frac{s-a}{s-b}
,\\
\sigmaB = \frac{\alphaB+\betaB+\gammaB}{2}
.
\end{gather*}

Analogously to \ref{sec:case_23},
we obtain the following solutions in Cases~4 and~5.
\begin{gather}
\left\{
\begin{aligned}
r_1 &=
\rC\sinh^2(\sigmaB-\alphaB)
,\\
r_2 &=
r\cosh^2(\sigmaB-\betaB)
,\\
r_3 &=
\rA\sinh^2(\sigmaB-\gammaB)
.
\end{aligned}
\right.
\label{schellbach:b1}
\displaybreak[0]\\
\left\{
\begin{aligned}
r_1 &=
\rC\sinh^2\sigmaB
,\\
r_2 &=
r\cosh^2(\sigmaB-\gammaB)
,\\
r_3 &=
\rA\sinh^2(\sigmaB-\betaB)
.
\end{aligned}
\right.
\label{schellbach:b2}
\displaybreak[0]\\
\left\{
\begin{aligned}
r_1 &=
\rC\sinh^2(\sigmaB-\gammaB)
,\\
r_2 &=
r\cosh^2\sigmaB
,\\
r_3 &=
\rA\sinh^2(\sigmaB-\alphaB)
.
\end{aligned}
\right.
\label{schellbach:b3}
\displaybreak[0]\\
\left\{
\begin{aligned}
r_1 &=
\rC\sinh^2(\sigmaB-\betaB)
,\\
r_2 &=
r\cosh^2(\sigmaB-\alphaB)
,\\
r_3 &=
\rA\sinh^2\sigmaB
.
\end{aligned}
\right.
\label{schellbach:b4}
\displaybreak[0]\\
\left\{
\begin{aligned}
r_1 &=
\rC\cosh^2(\sigmaB-\alphaB)
,\\
r_2 &=
r\sinh^2(\sigmaB-\betaB)
,\\
r_3 &=
\rA\cosh^2(\sigmaB-\gammaB)
.
\end{aligned}
\right.
\label{schellbach:b5}
\displaybreak[0]\\
\left\{
\begin{aligned}
r_1 &=
\rC\cosh^2\sigmaB
,\\
r_2 &=
r\sinh^2(\sigmaB-\gammaB)
,\\
r_3 &=
\rB\cosh^2(\sigmaB-\betaB)
.
\end{aligned}
\right.
\label{schellbach:b6}
\displaybreak[0]\\
\left\{
\begin{aligned}
r_1 &=
\rC\cosh^2(\sigmaB-\gammaB)
,\\
r_2 &=
r\sinh^2\sigmaB
,\\
r_3 &=
\rA\cosh^2(\sigmaB-\alphaB)
.
\end{aligned}
\right.
\label{schellbach:b7}
\displaybreak[0]\\
\left\{
\begin{aligned}
r_1 &=
\rC\cosh^2(\sigmaB-\betaB)
,\\
r_2 &=
r\sinh^2(\sigmaB-\alphaB)
,\\
r_3 &=
\rA\cosh^2\sigmaB
.
\end{aligned}
\right.
\label{schellbach:b8}
\end{gather}

\subsection{Cases 6 \& 7}
Define $\alphaC,\betaC,\gammaC\in(0,+\infty)$ and $\sigmaC$ by
\begin{gather*}
\sinh^2\alphaC = \frac{s-b}{s-c}
,\qquad
\sinh^2\betaC = \frac{s-a}{s-c}
,\qquad
\sinh^2\gammaC = \frac{c}{s-c}
,\\
\sigmaC = \frac{\alphaC+\betaC+\gammaC}{2}
.
\end{gather*}

Analogously to \ref{sec:case_23},
we obtain the following solutions in Cases~6 and~7.
\begin{gather}
\left\{
\begin{aligned}
r_1 &=
\rB\sinh^2(\sigmaC-\alphaC)
,\\
r_2 &=
\rA\sinh^2(\sigmaC-\betaC)
,\\
r_3 &=
r\cosh^2(\sigmaC-\gammaC)
.
\end{aligned}
\right.
\label{schellbach:c1}
\displaybreak[0]\\
\left\{
\begin{aligned}
r_1 &=
\rB\sinh^2\sigmaC
,\\
r_2 &=
\rA\sinh^2(\sigmaC-\gammaC)
,\\
r_3 &=
r\cosh^2(\sigmaC-\betaC)
.
\end{aligned}
\right.
\label{schellbach:c2}
\displaybreak[0]\\
\left\{
\begin{aligned}
r_1 &=
\rB\sinh^2(\sigmaC-\gammaC)
,\\
r_2 &=
\rA\sinh^2\sigmaC
,\\
r_3 &=
r\cosh^2(\sigmaC-\alphaC)
.
\end{aligned}
\right.
\label{schellbach:c3}
\displaybreak[0]\\
\left\{
\begin{aligned}
r_1 &=
\rB\sinh^2(\sigmaC-\betaC)
,\\
r_2 &=
\rA\sinh^2(\sigmaC-\alphaC)
,\\
r_3 &=
r\cosh^2\sigmaC
.
\end{aligned}
\right.
\label{schellbach:c4}
\displaybreak[0]\\
\left\{
\begin{aligned}
r_1 &=
\rB\cosh^2(\sigmaC-\alphaC)
,\\
r_2 &=
\rA\cosh^2(\sigmaC-\betaC)
,\\
r_3 &=
r\sinh^2(\sigmaC-\gammaC)
.
\end{aligned}
\right.
\label{schellbach:c5}
\displaybreak[0]\\
\left\{
\begin{aligned}
r_1 &=
\rB\cosh^2\sigmaC
,\\
r_2 &=
\rA\cosh^2(\sigmaC-\gammaC)
,\\
r_3 &=
r\sinh^2(\sigmaC-\betaC)
.
\end{aligned}
\right.
\label{schellbach:c6}
\displaybreak[0]\\
\left\{
\begin{aligned}
r_1 &=
\rB\cosh^2(\sigmaC-\gammaC)
,\\
r_2 &=
\rA\cosh^2\sigmaC
,\\
r_3 &=
r\sinh^2(\sigmaC-\betaC)
.
\end{aligned}
\right.
\label{schellbach:c7}
\displaybreak[0]\\
\left\{
\begin{aligned}
r_1 &=
\rB\cosh^2(\sigmaC-\betaC)
,\\
r_2 &=
\rA\cosh^2(\sigmaC-\alphaC)
,\\
r_3 &=
r\sinh^2\sigmaC
.
\end{aligned}
\right.
\label{schellbach:c8}
\end{gather}

\section{Conclusion}
\begin{theorem}
For any triangle,
there exist 32 triplets of circles
such that each circle is tangent to
the other two circles
and to two of the sides of the reference triangle or their extensions.
The radii can be expressed by
\eqref{schellbach:i1}--\eqref{schellbach:i8},
\eqref{schellbach:a1}--\eqref{schellbach:a8},
\eqref{schellbach:b1}--\eqref{schellbach:b8},
\eqref{schellbach:c1}--\eqref{schellbach:c8}.
\end{theorem}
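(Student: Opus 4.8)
The plan is to assemble the theorem by collecting the case analysis carried out in Section~\ref{sec:case_1} through Cases~6 and~7. First I would invoke the classification of Section~\ref{sec:classify}: any triplet of mutually tangent circles, each tangent to two of the three side-lines, forces the centers $\ptA'$, $\ptB'$, $\ptC'$ into one of the seven consistent configurations listed there. This reduces the theorem to treating those seven cases, and the key structural observation is that they collapse into exactly four algebraic problems — Case~1 on its own, and the pairs $\{2,3\}$, $\{4,5\}$, $\{6,7\}$ — each governed by a single system of six equations in $u,v,w,x,y,z$.

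Next I would justify the passage from geometry to algebra in each group. The three equal-tangent-length relations along the sides $\ptB\ptC$, $\ptC\ptA$, $\ptA\ptB$ give the linear equations (the $\pm$ alternatives being encoded in the signs of $u,v,w$), while the three mutual-tangency relations between the circles supply the quadratic identities $xy=w^2$, $xz=v^2$, $yz=u^2$. The feasibility condition on the parameters ($lmn=l+m+n$ for Case~1, and $l\bar m\bar n=l-\bar m-\bar n$ for the pair $\{2,3\}$, with the analogues for $\{4,5\}$ and $\{6,7\}$) is exactly the identity satisfied by the half-angle cotangents and tangents of a genuine triangle, so the reduction is faithful. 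With the systems in hand, the eight solutions of each (established in Appendices~\ref{sec:how-to-solve-1} and~\ref{sec:how-to-solve-2}) are converted back to radii through the substitutions $\sin^2\alpha=a/s$, $\sinh^2\alphaA=a/(s-a)$, and their variants; the sample computations leading to \eqref{schellbach:i1} and \eqref{schellbach:a1} show that each solution telescopes, via a product-to-sum identity, into a clean $\sin^2$, $\cos^2$, $\sinh^2$ or $\cosh^2$ of a half-sum such as $\sigma-\alpha$ or $\sigmaA-\alphaA$. Running the same computation over all eight solutions in each group yields the four families \eqref{schellbach:i1}--\eqref{schellbach:i8}, \eqref{schellbach:a1}--\eqref{schellbach:a8}, \eqref{schellbach:b1}--\eqref{schellbach:b8} and \eqref{schellbach:c1}--\eqref{schellbach:c8}, for a total of $4\times 8=32$ formulae.

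To finish I would verify realizability together with the exact count. Each formula produces three positive radii, since every right-hand side is a positive multiple of a squared sine, cosine, hyperbolic sine or hyperbolic cosine, and the signs of $u,v,w,x,y,z$ select a definite choice in each $\pm$ disjunction and a definite side (barred or unbarred) for each center; this reconstructs a genuine triplet in a uniquely determined case, so no configuration is double-counted across the four groups. Conversely, since every admissible triplet lies in one of the seven cases and hence solves one of the four systems, none is missed; as each system has precisely eight solutions, there are exactly $32$ triplets.

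The hard part will be the realizability bookkeeping of the last paragraph. Matching each algebraic solution to its geometric case is immediate for Case~1, but for the unified pairs $\{2,3\}$, $\{4,5\}$, $\{6,7\}$ a single system serves two distinct configurations, distinguished only through the sign conventions on $x,y,z$ (positive in Cases~2, 4, 6 and negated in Cases~3, 5, 7). I expect the delicate step to be confirming that the eight solutions of each unified system distribute correctly between the two cases it represents, producing eight distinct and fully realizable triplets per group rather than spurious or coincident ones, and checking that the standing hypothesis that the nine tangent points be distinct excludes exactly the degenerate solutions.
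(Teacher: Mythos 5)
Your proposal follows essentially the same route as the paper: the seven-case classification of Section~\ref{sec:classify}, the collapse into four algebraic systems (Case~1 and the unified pairs $\{2,3\}$, $\{4,5\}$, $\{6,7\}$), the eight solutions of each system from Appendices~\ref{sec:how-to-solve-1} and~\ref{sec:how-to-solve-2}, and the trigonometric/hyperbolic back-substitution yielding the $4\times 8=32$ formulae. The realizability bookkeeping you flag as the hard part is in fact left implicit in the paper as well, which leans on the classical existence results of Derousseau, Pampuch, and Lob--Richmond cited in the introduction rather than reconstructing the configurations from the algebraic solutions.
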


Figures~\ref{fig:i1}--\ref{fig:c8} illustrate
the 32 triplets of circles
for a triangle $\ptA\ptB\ptC$ such that
$A=45^\circ$,
$B=54^\circ$,
$C=81^\circ$.

\appendix
\section{Solutions of the systems of equations}
\label{sec:how-to-solve-1}
\label{sec:how-to-solve-2}
In this appendix,
we will solve some systems of equations by computing Gr\"obner bases.
Although it is difficult to compute the Gr\"obner bases by hand,
any computer algebra system that can compute Gr\"obner bases should work.

\begin{proposition}
The system of equations~\eqref{eq:in}
for the variables $u$, $v$, $w$, $x$, $y$, $z$
with the positive real parameters $l$, $m$, $n$
under the restriction $lmn={l+m+n}$.
has $8$ solutions
\eqref{lmn:i1}--\eqref{lmn:i8}.
\end{proposition}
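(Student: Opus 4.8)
The plan is to treat $l,m,n$ as parameters lying on the surface $lmn=l+m+n$ and to regard \eqref{eq:in} as a zero-dimensional polynomial system in the six unknowns $u,v,w,x,y,z$ over the field $K=\mathbb{Q}(l,m,n)$. First I would use the three affine-linear relations — the first three equations of \eqref{eq:in} — to eliminate $u,v,w$, writing
\[
u=\frac{m+n-my-nz}{2},\quad v=\frac{l+n-lx-nz}{2},\quad w=\frac{l+m-lx-my}{2},
\]
and substituting into the last three equations. This leaves three quadratic equations in $x,y,z$ alone, namely $4xy=(l+m-lx-my)^2$, $4xz=(l+n-lx-nz)^2$, and $4yz=(m+n-my-nz)^2$. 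A crude B\'ezout count already suggests at most $2^3=8$ solutions, which is exactly the number claimed; the eight solutions should correspond to the eight sign choices hidden in the three square roots $\sqrt{l^2+1}$, $\sqrt{m^2+1}$, $\sqrt{n^2+1}$ that appear in \eqref{lmn:i1}--\eqref{lmn:i8}.

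Next I would make this count rigorous by computing a Gr\"obner basis of the ideal generated by the three quadratics (together with the relation $lmn-l-m-n$, or after adjoining it to the coefficient field) with respect to a lexicographic order, say $x>y>z$. The expected output is a triangular system whose last element is a univariate polynomial in $z$; confirming that the ideal is zero-dimensional and that the quotient $K[x,y,z]/I$ has dimension $8$ over $K$ establishes that there are exactly eight solutions, counted with multiplicity, for generic parameters on the surface. The constraint $lmn=l+m+n$ is what allows the univariate eliminant to factor so that its roots admit the closed forms involving $\sqrt{l^2+1}$, $\sqrt{m^2+1}$, $\sqrt{n^2+1}$; without it the expressions do not collapse to the stated shape.

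Finally I would verify directly that each of the eight tuples \eqref{lmn:i1}--\eqref{lmn:i8} satisfies all six equations of \eqref{eq:in}. The three linear equations are immediate, and the three quadratic equations $xy=w^2$, $xz=v^2$, $yz=u^2$ reduce, after clearing denominators, to polynomial identities in $l,m,n$ that hold precisely modulo $lmn-l-m-n$; here one repeatedly uses $\bigl(\sqrt{l^2+1}\bigr)^2=l^2+1$ to remove the radicals. Since the eight tuples are pairwise distinct for generic parameters — they are distinguished by the sign pattern of the three radicals — and we have already shown the system has at most eight solutions, they must be all of them. I expect the main obstacle to be the completeness half, ruling out a ninth solution, which is not accessible by hand: it rests on the Gr\"obner basis computation and on checking that the degree-eight eliminant does not drop degree on the surface $lmn=l+m+n$, so that the eight exhibited solutions are genuinely distinct and exhaustive rather than collapsing.
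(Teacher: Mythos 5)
Your proposal is correct in outline, but it takes a genuinely different route from the paper's own proof. You first eliminate $u$, $v$, $w$ by hand via the three linear equations, reducing \eqref{eq:in} to three quadrics in $x$, $y$, $z$ over the function field of the surface $lmn=l+m+n$, and then argue by ``upper bound plus exhibition'': a Gr\"obner/B\'ezout certificate that the reduced ideal has degree $8$, followed by direct verification that the eight tuples \eqref{lmn:i1}--\eqref{lmn:i8} satisfy the system (modulo $lmn-l-m-n$) and are pairwise distinct, which for positive parameters follows from the sign patterns since all three radicals exceed $1$. The paper does something structurally different: it keeps all nine quantities $u,v,w,x,y,z,l,m,n$ as variables, adjoins the constraint polynomial $lmn-l-m-n$ to the system, computes the reduced degree-reverse-lexicographic Gr\"obner basis ($67$ polynomials), and extracts six basis elements $f_1,\dots,f_6$ --- three quartics, one in each of $u$, $v$, $w$, each splitting into two quadratics (which is precisely where the closed forms $\frac{\pm 1-l\pm\sqrt{l^2+1}}{2}$ originate), plus three polynomials linear in $x$, $y$, $z$. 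Because $f_1,\dots,f_6$ lie in the ideal, their common zeros form a finite superset of the solutions of \eqref{eq:in} at \emph{every} parameter specialization; the paper then enumerates these candidates (it reports $128$) and filters them by substituting back into \eqref{eq:in} together with $l=(m+n)/(mn-1)$, after which exactly the eight solutions survive. The trade-offs: your argument is logically tighter (a degree-$8$ bound plus eight verified distinct solutions leaves nothing to enumerate) and its computation is smaller, but it can only certify formulas that are already known, whereas the paper's relax-and-filter scheme actually \emph{derives} them; conversely, the paper's superset inclusion holds at all parameter values while your degree bound over the function field is a priori only generic --- though, as you yourself note, both arguments in the end establish the count only in this symbolic/generic sense, and both rest on a machine computation that cannot be reproduced by hand.
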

\begin{proof}
Counting $l$, $m$, $n$ among the variables
in addition to $u$, $v$, $w$, $x$, $y$, $z$,
we compute the reduced Gr\"obner basis of
$\{
my+nz+2u-m-n,\linebreak[0]\;
lx+nz+2v-l-n,\linebreak[0]\;
lx+my+2w-l-m,\linebreak[0]\;
xy-w^2,\linebreak[0]\;
xz-v^2,\linebreak[0]\;
yz-u^2,\linebreak[0]\;
lmn-l-m-n
\}$
with the degree reverse lexicographical ordering
$x \succ y \succ z \succ u \succ v \succ w \succ l \succ m \succ n$.
The reduced Gr\"obner basis consists of $67$ polynomials
including
\begin{align*}
f_1 &= (2u^2+2lu-2u-l)(2u^2+2lu+2u+l)
,\\
f_2 &= (2v^2+2mv-2v-m)(2v^2+2mv+2v+m)
,\\
f_3 &= (2w^2+2nw-2w-n)(2w^2+2nw+2w+n)
,\\
f_4 &= lx-u+v+w-l
,\\
f_5 &= my+u-v+w-m
,\\
f_6 &= nz+u+v-w-n
.
\end{align*}

By solving $\{f_1=0,\; f_2=0,\; f_3=0,\; f_4=0,\; f_5=0,\; f_6=0 \}$
for $u$, $v$, $w$, $x$, $y$, $z$,
we obtain $128$ solutions.
Note that $lmn=l+m+n$ is equivalent to
\begin{equation}
l = \frac{m+n}{mn-1}.
\label{sol:in.l.mn}
\end{equation}
By assigning each of the $128$ solutions together with \eqref{sol:in.l.mn} to
$\{
my+nz+2u-m-n,\linebreak[0]\;
lx+nz+2v-l-n,\linebreak[0]\;
lx+my+2w-l-m,\linebreak[0]\;
xy-w^2,\linebreak[0]\;
xz-v^2,\linebreak[0]\;
yz-u^2
\}$
and then picking out the solutions
such that the assignment makes all of the polynomials equal $0$,
we still have $8$ solutions
\eqref{lmn:i1}--\eqref{lmn:i8},
which are the solutions of~\eqref{eq:in}.
\end{proof}

\begin{proposition}
The system of equations~\eqref{eq:exA}
for the variables $u$, $v$, $w$, $x$, $y$, $z$
with the positive real parameters $l$, $\bar{m}$, $\bar{n}$
under the restriction $l\bar{m}\bar{n}={l-\bar{m}-\bar{n}}$
has $8$ solutions
\eqref{lmn:a1}--\eqref{lmn:a8}.
\end{proposition}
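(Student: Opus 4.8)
The plan is to prove this proposition by the same route as the preceding one, with the parameters $m$, $n$ replaced by $\bar{m}$, $\bar{n}$ and the constraint $lmn=l+m+n$ replaced by $l\bar{m}\bar{n}=l-\bar{m}-\bar{n}$. First I would adjoin $l$, $\bar{m}$, $\bar{n}$ to the unknowns and compute the reduced Gr\"obner basis of the ideal generated by the six polynomials underlying~\eqref{eq:exA}, namely $\bar{m}y+\bar{n}z-2u-\bar{m}-\bar{n}$, $lx-\bar{n}z+2v-l+\bar{n}$, $lx-\bar{m}y+2w-l+\bar{m}$, $xy-w^2$, $xz-v^2$, $yz-u^2$, together with the constraint polynomial $l\bar{m}\bar{n}-l+\bar{m}+\bar{n}$, under a degree reverse lexicographical ordering with $x\succ y\succ z\succ u\succ v\succ w\succ l\succ\bar{m}\succ\bar{n}$. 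Exactly as in the cotangent case, a computer algebra system is indispensable here, the basis being far too large to obtain by hand.

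Next I would isolate from the basis the analogues of $f_1,\dots,f_6$. Squaring the closed forms in~\eqref{lmn:a1}--\eqref{lmn:a8} predicts that the basis contains the univariate quartics
\begin{align*}
f_1 &= (2u^2+2lu-2u-l)(2u^2+2lu+2u+l),\\
f_2 &= (2v^2-2\bar{m}v+2v-\bar{m})(2v^2-2\bar{m}v-2v+\bar{m}),\\
f_3 &= (2w^2-2\bar{n}w+2w-\bar{n})(2w^2-2\bar{n}w-2w+\bar{n}),
\end{align*}
together with the linear relations $f_4=lx-u+v+w-l$, $f_5=\bar{m}y-u+v-w-\bar{m}$ and $f_6=\bar{n}z-u-v+w-\bar{n}$; one checks directly that these six relations are compatible with~\eqref{eq:exA}. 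Solving $\{f_1=\dots=f_6=0\}$ for $u$, $v$, $w$, $x$, $y$, $z$ then produces a finite list of candidate solutions, each choice of root of the three quartics fixing $u$, $v$, $w$ and hence, through the linear relations, $x$, $y$, $z$.

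Finally I would impose the constraint. Rewriting $l\bar{m}\bar{n}=l-\bar{m}-\bar{n}$ as $l=\frac{\bar{m}+\bar{n}}{1-\bar{m}\bar{n}}$, the hyperbolic analogue of~\eqref{sol:in.l.mn}, I would substitute this into each candidate and retain only those for which all six polynomials of~\eqref{eq:exA} vanish identically; the surviving eight candidates are precisely~\eqref{lmn:a1}--\eqref{lmn:a8}. As with the previous proposition, the computational bulk must be delegated to a Gr\"obner basis engine, so the genuine content of the argument lies in this last filtering step: matching each surd $\sqrt{l^2+1}$, $\sqrt{\bar{m}^2+1}$, $\sqrt{\bar{n}^2+1}$ to the correct sign branch under the constraint is exactly what cuts the candidate list down to the eight stated solutions, and verifying that no further candidates slip through is the point requiring care.
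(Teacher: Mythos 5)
Your proposal is correct and takes essentially the same approach as the paper's proof: the same Gr\"obner basis computation with $l$, $\bar{m}$, $\bar{n}$ adjoined as variables under the same ordering, the same six polynomials $f_1,\dots,f_6$ extracted from the basis, and the same final filtering of the candidate solutions against the original system~\eqref{eq:exA} under the substitution $l=\frac{\bar{m}+\bar{n}}{1-\bar{m}\bar{n}}$. The only differences are incidental bookkeeping (the paper additionally records that the reduced basis has $67$ polynomials and counts the candidates before filtering).
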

\begin{proof}
Counting $l$, $\bar{m}$, $\bar{n}$ among the variables
in addition to $u$, $v$, $w$, $x$, $y$, $z$,
we compute the reduced Gr\"obner basis of
$\{
\bar{m}y+\bar{n}z-2u-\bar{m}-\bar{n},\linebreak[0]\;
lx-\bar{n}z+2v-l+\bar{n},\linebreak[0]\;
lx-\bar{m}y+2w-l+\bar{m},\linebreak[0]\;
xy-w^2,\linebreak[0]\;
xz-v^2,\linebreak[0]\;
yz-u^2,\linebreak[0]\;
l\bar{m}\bar{n}-l+\bar{m}+\bar{n}
\}$
with the degree reverse lexicographical ordering
$x \succ y \succ z \succ u \succ v \succ w \succ l \succ \bar{m} \succ \bar{n}$.
The reduced Gr\"obner basis consists of $67$ polynomials
including
\begin{align*}
f_1 &= (2u^2+2lu-2u-l)(2u^2+2lu+2u+l)
,\\
f_2 &= (2v^2-2\bar{m}v-2v+\bar{m})(2v^2-2\bar{m}v+2v-\bar{m})
,\\
f_3 &= (2w^2-2\bar{n}w-2w+\bar{n})(2w^2-2\bar{n}w+2w-\bar{n})
,\\
f_4 &= lx-u+v+w-l
,\\
f_5 &= \bar{m}y-u+v-w-\bar{m}
,\\
f_6 &= \bar{n}z-u-v+w-\bar{n}
.
\end{align*}

By solving $\{f_1=0,\; f_2=0,\; f_3=0,\; f_4=0,\; f_5=0,\; f_6=0 \}$
for $u$, $v$, $w$, $x$, $y$, $z$,
we obtain $128$ solutions.
Note that $l\bar{m}\bar{n}=l-\bar{m}-\bar{n}$ is equivalent to
\begin{equation}
l = -\frac{\bar{m}+\bar{n}}{\bar{m}\bar{n}-1}.
\label{sol:exA.l.mn}
\end{equation}
By assigning each of the $128$ solutions together with \eqref{sol:exA.l.mn} to
$\{
\bar{m}y+\bar{n}z-2u-\bar{m}-\bar{n},\linebreak[0]\;
lx-\bar{n}z+2v-l+\bar{n},\linebreak[0]\;
lx-\bar{m}y+2w-l+\bar{m},\linebreak[0]\;
xy-w^2,\linebreak[0]\;
xz-v^2,\linebreak[0]\;
yz-u^2
\}$
and then picking out the solutions
such that the assignment makes all of the polynomials equal $0$,
we still have $8$ solutions
\eqref{lmn:a1}--\eqref{lmn:a8},
which are the solutions of~\eqref{eq:exA}.
\end{proof}

\nocite{Bottema2001,Stevanovic2003}
\bibliography{malfatti}

\begin{thebibliography}{10}

\bibitem{sansha-sanen}
Naonobu Ajima.
\newblock San-sha san-en jutsu (method on three segments and three circles),
  n.d.
\newblock (in classical Chinese). reprinted in
  \cite[pp.329--337]{Ajima-zenshu}.

\bibitem{fukyu-sanpo}
Naonobu Ajima.
\newblock {\em Fuky{\=u} Sanp{\=o} (Immortal Mathematics)}.
\newblock 1799.
\newblock (in classical Chinese). reprinted in \cite[pp.1--34]{Ajima-zenshu}.

\bibitem{Bottema2001}
Oene Bottema.
\newblock The {M}alfatti problem.
\newblock {\em Forum Geometricorum}, 1:43--50, 2001.

\bibitem{Derousseau1895}
J.~Derousseau.
\newblock Historique et r{\'e}solution analytique compl{\`e}te du probl{\`e}me
  de malfatti.
\newblock {\em M{\'e}moires de la Soci{\'e}t{\'e} royale des sciences de
  Li{\`e}ge}, 2-18:1--52, 1895.

\bibitem{Dorrie1958}
Heinrich D{\"o}rrie.
\newblock {\em Triumph der {M}athematik}.
\newblock Physica-Verlag, 5th edition, 1958.

\bibitem{Dorrie1965}
Heinrich D{\"o}rrie.
\newblock {\em 100 Great Problems of Elementary Mathematics}.
\newblock Dover Publications, 1965.
\newblock English translation of \cite{Dorrie1958}.

\bibitem{Edmunds1881}
E.~J. Edmunds.
\newblock Solution to problem 186 {I}.
\newblock {\em The Mathematical Visitor}, 1:188, 1881.

\bibitem{Ajima-zenshu}
Akira Hirayama and Motohisa Matuoka, editors.
\newblock {\em Ajima Naonobu Zensh{\=u} (Naonobu Ajima's Complete Works)}.
\newblock Publication Department of Fuji Junior College, Tokyo, 1966.
\newblock (in Japanese).

\bibitem{Lob&Richmond1930}
H.~Lob and H.~W. Richmond.
\newblock On the solution of {M}alfatti's problem for a triangle.
\newblock {\em Proc.\ London Math.\ Soc.}, 2:287--304, 1930.

\bibitem{Malfatti1803}
Gianfrancesco Malfatti.
\newblock Memoria sopra un problema sterotomico.
\newblock {\em Memorie di Matematica e di Fisic{\`a} della Societa Italiana
  delle Scienze}, 10-1:235--244, 1803.

\bibitem{Pampuch1904}
A.~Pampuch.
\newblock Die 32 {L\"o}sungen des {M}alfattischen {P}roblems.
\newblock {\em Archiv der Mathematik und Physik}, 8:36--49, 1904.

\bibitem{Schellbach1853a}
Schellbach.
\newblock Eine {L\"o}sung der {M}alfattischen {A}ufgabe.
\newblock {\em J. reine angew.\ Math.}, 45:91--92, 1853.

\bibitem{Schellbach1853b}
Schellbach.
\newblock Solution du probl{\`e}me de malfatti, dans le triangle rectiligne et
  sph{\'e}rique.
\newblock {\em Nouv.\ Ann.\ de Math.}, 12:131--136, 1853.

\bibitem{Stevanovic2003}
Milorad~R. Stevanovi{\'c}.
\newblock Triangle centers associated with the {M}alfatti circles.
\newblock {\em Forum Geometricorum}, 3:83--93, 2003.

\end{thebibliography}
\suppressfloats[t]

\begin{figure}
\begin{center}
\includegraphics[width=\textwidth]{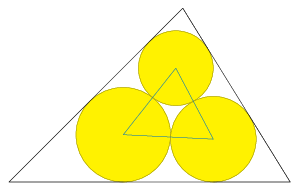}
\caption{%
\(
\protect\left\{
\protect\begin{aligned}
r_1 &=
\rA\sin^2(\sigma-\alpha)
,\protect\\
r_2 &=
\rB\sin^2(\sigma-\beta)
,\protect\\
r_3 &=
\rC\sin^2(\sigma-\gamma)
.
\protect\end{aligned}
\protect\right.
\)
}
\label{fig:i1}
\end{center}
\end{figure}
\begin{figure}
\begin{center}
\includegraphics[width=\textwidth]{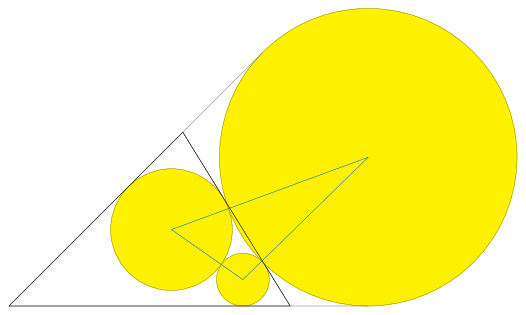}
\caption{%
\(
\protect\left\{
\protect\begin{aligned}
r_1 &=
\rA\sin^2\sigma
,\protect\\
r_2 &=
\rB\sin^2(\sigma-\gamma)
,\protect\\
r_3 &=
\rC\sin^2(\sigma-\beta)
.
\protect\end{aligned}
\protect\right.
\)
}
\end{center}
\end{figure}
\begin{figure}
\begin{center}
\includegraphics[width=\textwidth]{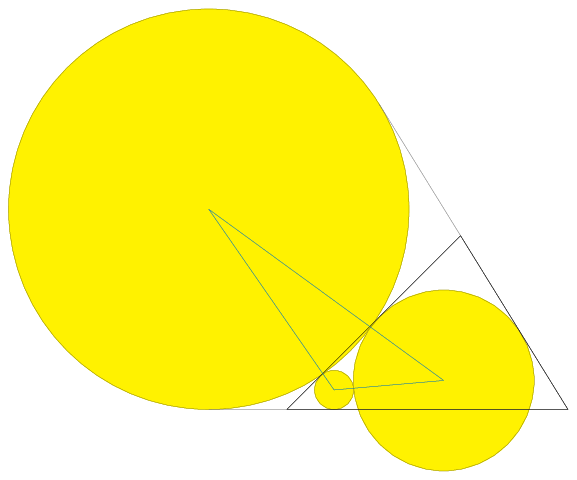}
\caption{%
\(
\protect\left\{
\protect\begin{aligned}
r_1 &=
\rA\sin^2(\sigma-\gamma)
,\protect\\
r_2 &=
\rB\sin^2\sigma
,\protect\\
r_3 &=
\rC\sin^2(\sigma-\alpha)
.
\protect\end{aligned}
\protect\right.
\)
}
\end{center}
\end{figure}
\begin{figure}
\begin{center}
\includegraphics[width=\textwidth]{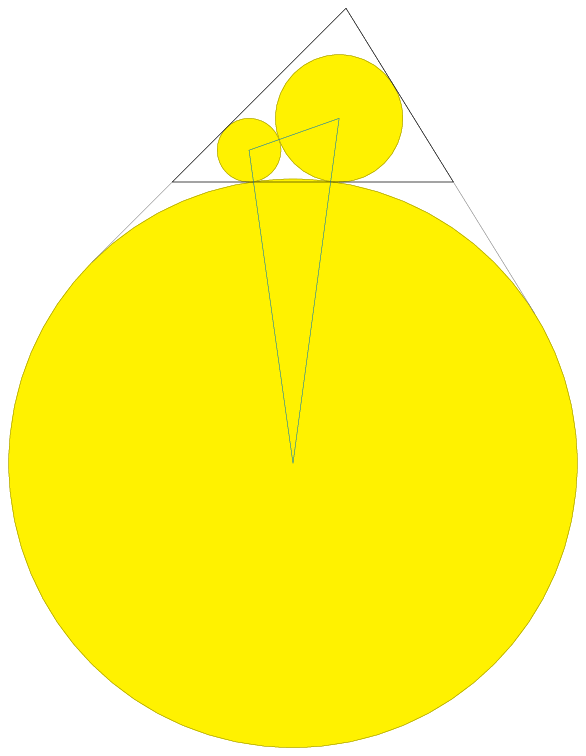}
\caption{%
\(
\protect\left\{
\protect\begin{aligned}
r_1 &=
\rA\sin^2(\sigma-\beta)
,\protect\\
r_2 &=
\rB\sin^2(\sigma-\alpha)
,\protect\\
r_3 &=
\rC\sin^2\sigma
.
\protect\end{aligned}
\protect\right.
\)
}
\end{center}
\end{figure}
\begin{figure}
\begin{center}
\includegraphics[width=\textwidth]{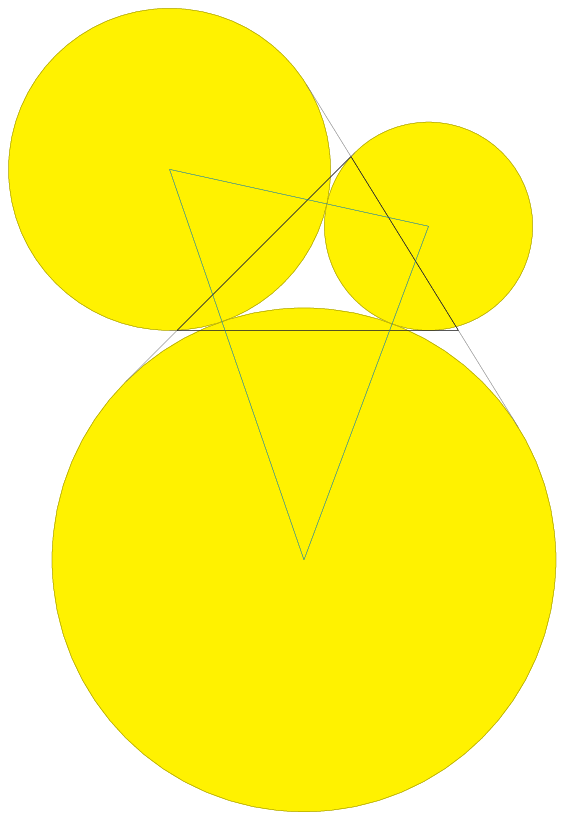}
\caption{%
\(
\protect\left\{
\protect\begin{aligned}
r_1 &=
\rA\cos^2(\sigma-\alpha)
,\protect\\
r_2 &=
\rB\cos^2(\sigma-\beta)
,\protect\\
r_3 &=
\rC\cos^2(\sigma-\gamma)
.
\protect\end{aligned}
\protect\right.
\)
}
\end{center}
\end{figure}
\begin{figure}
\begin{center}
\includegraphics[width=\textwidth]{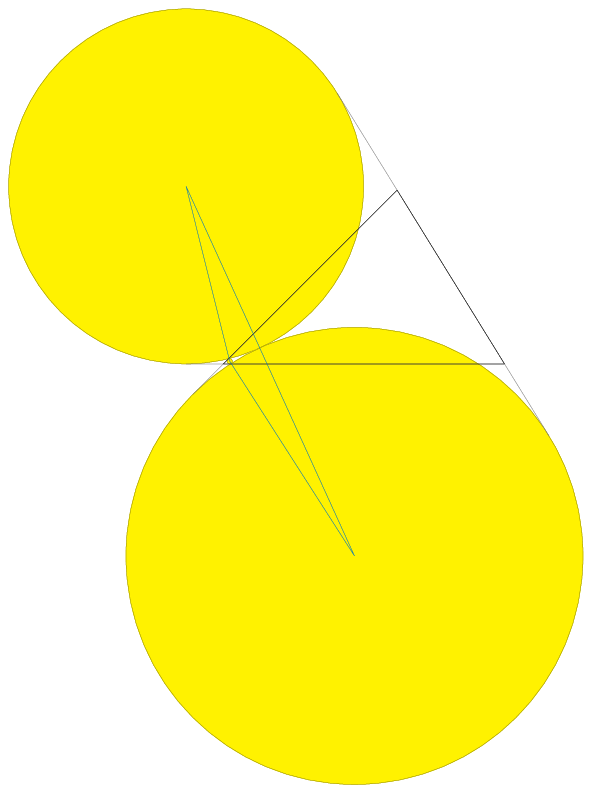}
\caption{%
\(
\protect\left\{
\protect\begin{aligned}
r_1 &=
\rA\cos^2\sigma
,\protect\\
r_2 &=
\rB\cos^2(\sigma-\gamma)
,\protect\\
r_3 &=
\rC\cos^2(\sigma-\beta)
.
\protect\end{aligned}
\protect\right.
\)
}
\end{center}
\end{figure}
\begin{figure}
\begin{center}
\includegraphics[width=\textwidth]{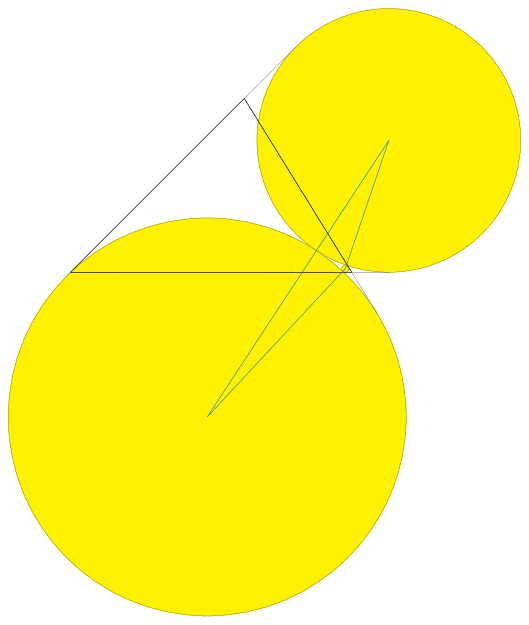}
\caption{%
\(
\protect\left\{
\protect\begin{aligned}
r_1 &=
\rA\cos^2(\sigma-\gamma)
,\protect\\
r_2 &=
\rB\cos^2\sigma
,\protect\\
r_3 &=
\rC\cos^2(\sigma-\alpha)
.
\protect\end{aligned}
\protect\right.
\)
}
\end{center}
\end{figure}
\begin{figure}
\begin{center}
\includegraphics[width=\textwidth]{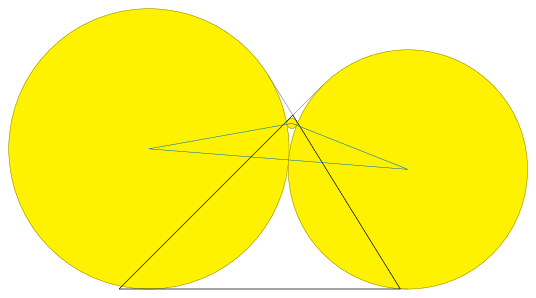}
\caption{%
\(
\protect\left\{
\protect\begin{aligned}
r_1 &=
\rA\cos^2(\sigma-\beta)
,\protect\\
r_2 &=
\rB\cos^2(\sigma-\alpha)
,\protect\\
r_3 &=
\rC\cos^2\sigma
.
\protect\end{aligned}
\protect\right.
\)
}
\end{center}
\end{figure}
\begin{figure}
\begin{center}
\includegraphics[width=\textwidth]{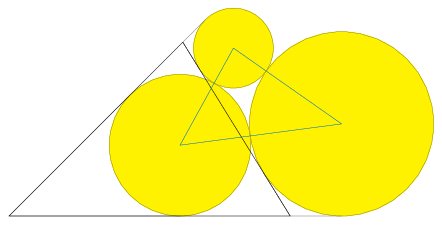}
\caption{%
\(
\protect\left\{
\protect\begin{aligned}
r_1 &=
r\cosh^2(\sigmaA-\alphaA)
,\protect\\
r_2 &=
\rC\sinh^2(\sigmaA-\betaA)
,\protect\\
r_3 &=
\rB\sinh^2(\sigmaA-\gammaA)
.
\protect\end{aligned}
\protect\right.
\)
}
\end{center}
\end{figure}
\begin{figure}
\begin{center}
\includegraphics[width=\textwidth]{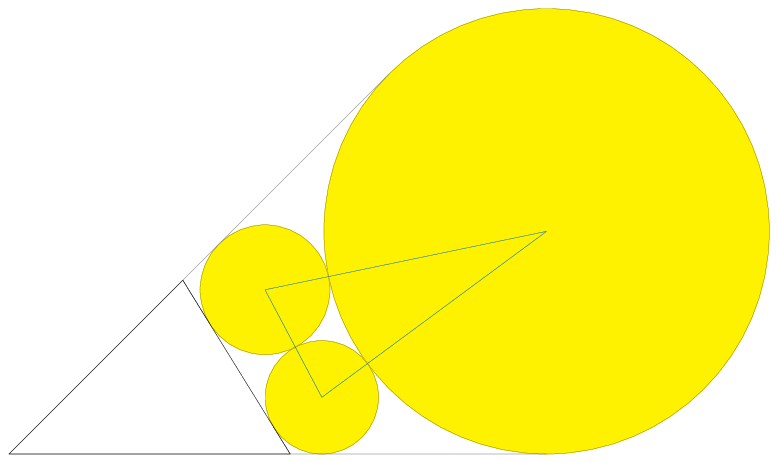}
\caption{%
\(
\protect\left\{
\protect\begin{aligned}
r_1 &=
r\cosh^2\sigmaA
,\protect\\
r_2 &=
\rC\sinh^2(\sigmaA-\gammaA)
,\protect\\
r_3 &=
\rB\sinh^2(\sigmaA-\betaA)
.
\protect\end{aligned}
\protect\right.
\)
}
\end{center}
\end{figure}
\begin{figure}
\begin{center}
\includegraphics[width=\textwidth]{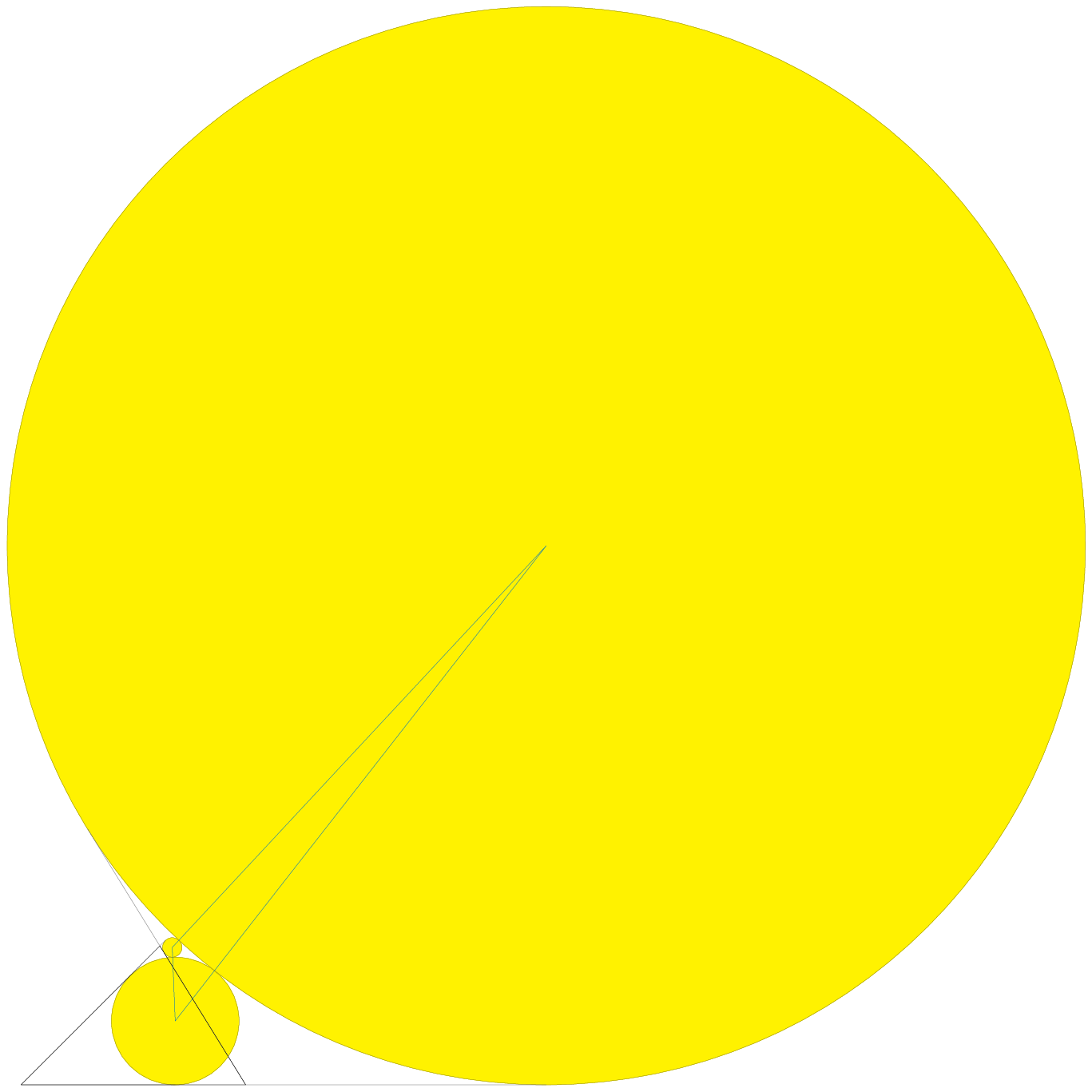}
\caption{%
\(
\protect\left\{
\protect\begin{aligned}
r_1 &=
r\cosh^2(\sigmaA-\gammaA)
,\protect\\
r_2 &=
\rC\sinh^2\sigmaA
,\protect\\
r_3 &=
\rB\sinh^2(\sigmaA-\alphaA)
.
\protect\end{aligned}
\protect\right.
\)
}
\end{center}
\end{figure}
\begin{figure}
\begin{center}
\includegraphics[width=\textwidth]{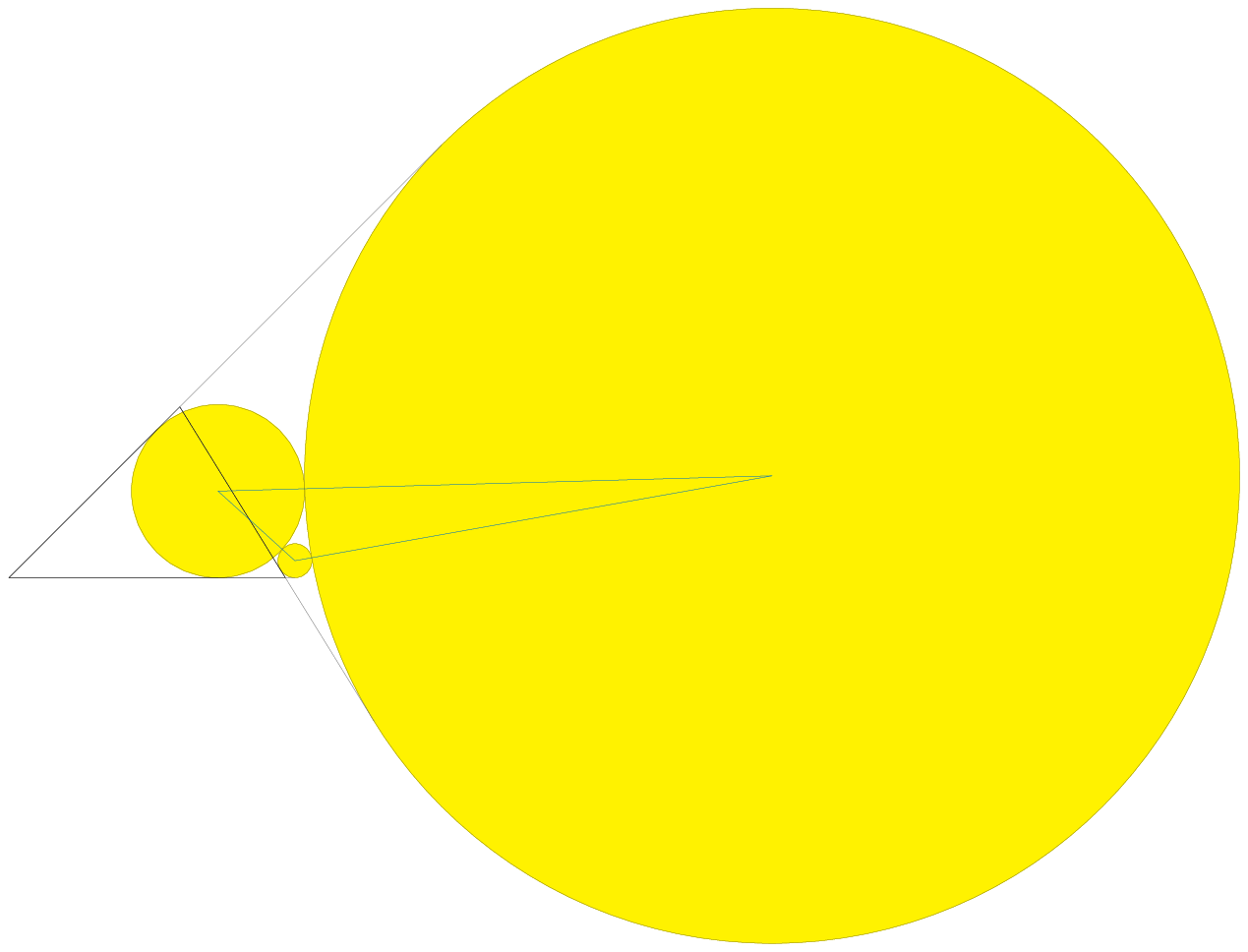}
\caption{%
\(
\protect\left\{
\protect\begin{aligned}
r_1 &=
r\cosh^2(\sigmaA-\betaA)
,\protect\\
r_2 &=
\rC\sinh^2(\sigmaA-\alphaA)
,\protect\\
r_3 &=
\rB\sinh^2\sigmaA
.
\protect\end{aligned}
\protect\right.
\)
}
\end{center}
\end{figure}
\begin{figure}
\begin{center}
\includegraphics[width=\textwidth]{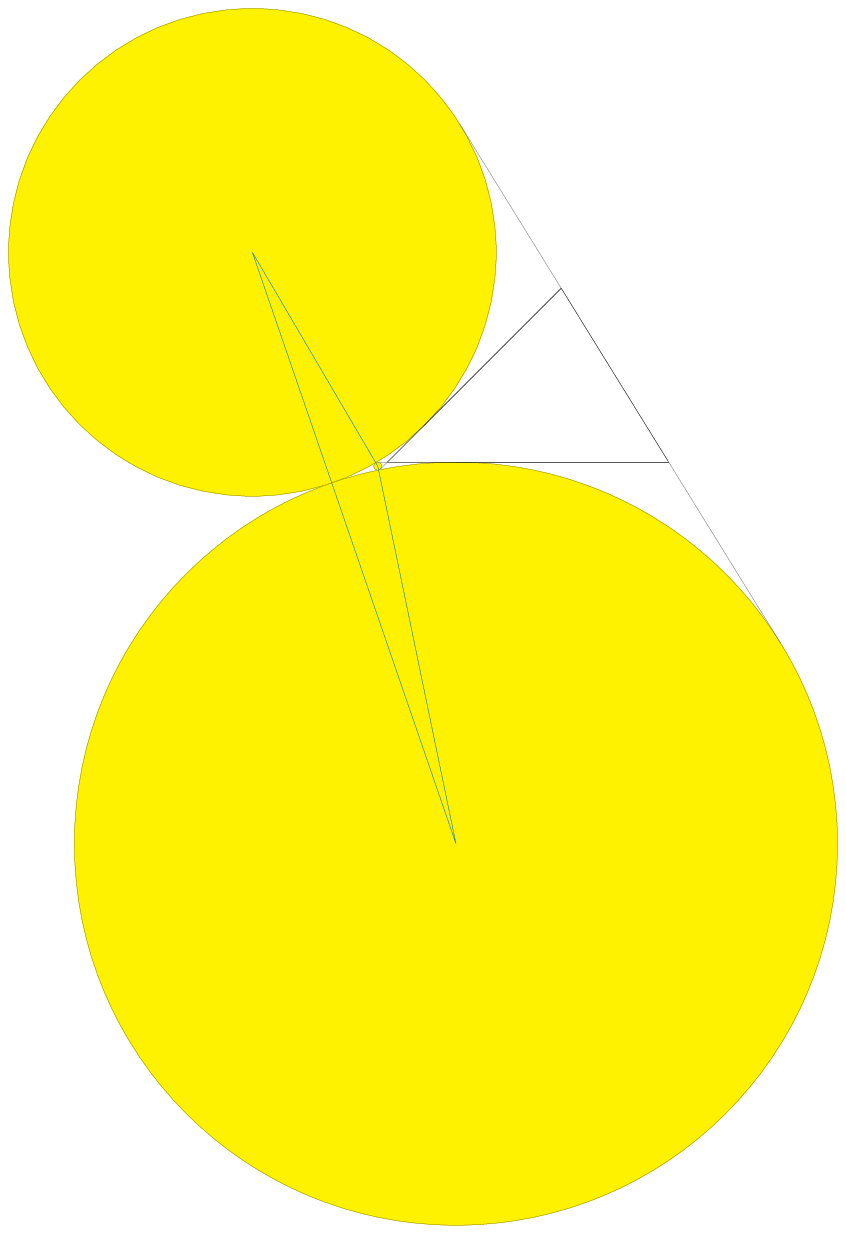}
\caption{%
\(
\protect\left\{
\protect\begin{aligned}
r_1 &=
r\sinh^2(\sigmaA-\alphaA)
,\protect\\
r_2 &=
\rC\cosh^2(\sigmaA-\betaA)
,\protect\\
r_3 &=
\rB\cosh^2(\sigmaA-\gammaA)
.
\protect\end{aligned}
\protect\right.
\)
}
\end{center}
\end{figure}
\begin{figure}
\begin{center}
\includegraphics[width=\textwidth]{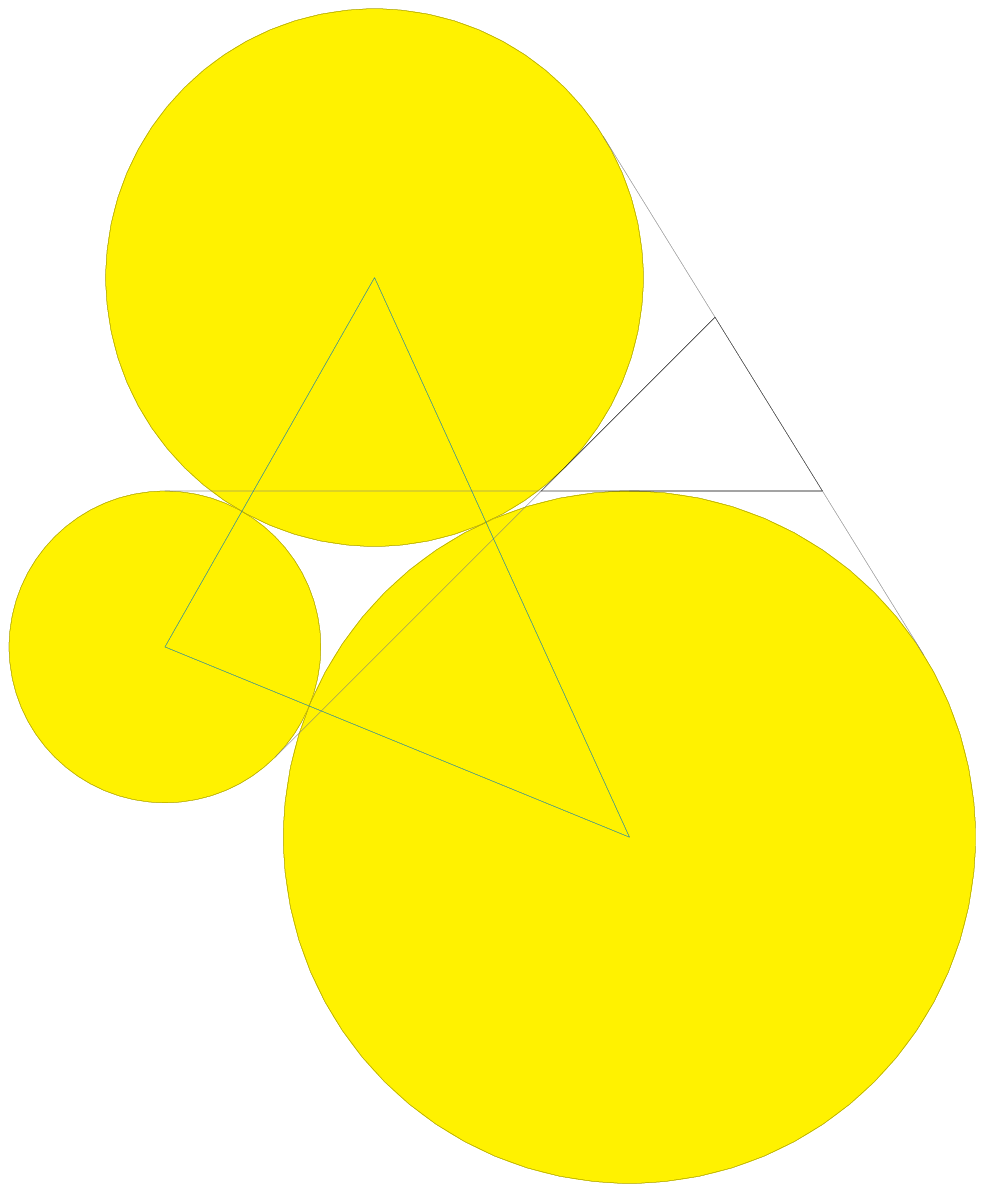}
\caption{%
\(
\protect\left\{
\protect\begin{aligned}
r_1 &=
r\sinh^2\sigmaA
,\protect\\
r_2 &=
\rC\cosh^2(\sigmaA-\gammaA)
,\protect\\
r_3 &=
\rB\cosh^2(\sigmaA-\betaA)
.
\protect\end{aligned}
\protect\right.
\)
}
\end{center}
\end{figure}
\begin{figure}
\begin{center}
\includegraphics[width=\textwidth]{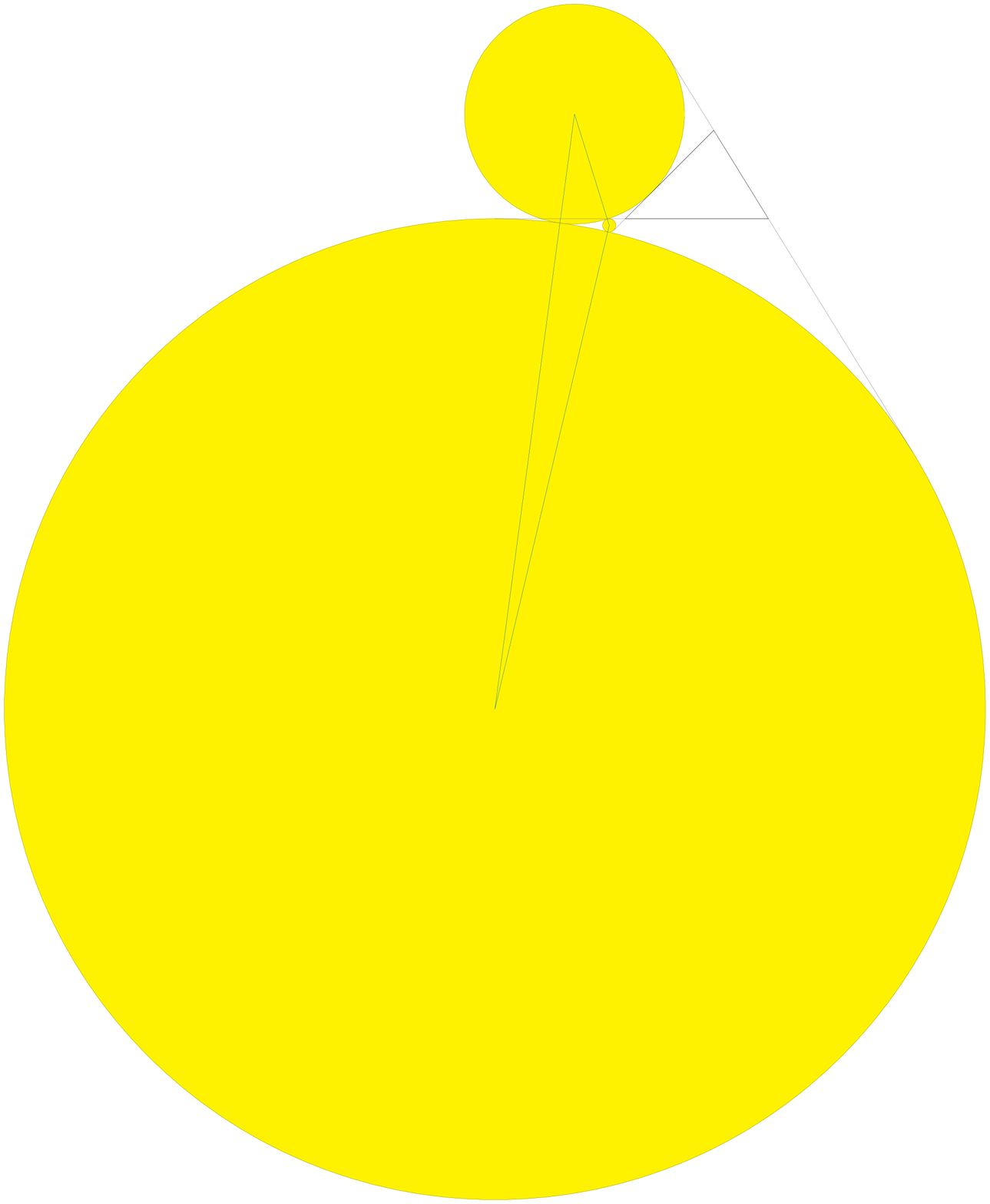}
\caption{%
\(
\protect\left\{
\protect\begin{aligned}
r_1 &=
r\sinh^2(\sigmaA-\gammaA)
,\protect\\
r_2 &=
\rC\cosh^2\sigmaA
,\protect\\
r_3 &=
\rB\cosh^2(\sigmaA-\alphaA)
.
\protect\end{aligned}
\protect\right.
\)
}
\end{center}
\end{figure}
\begin{figure}
\begin{center}
\includegraphics[width=\textwidth]{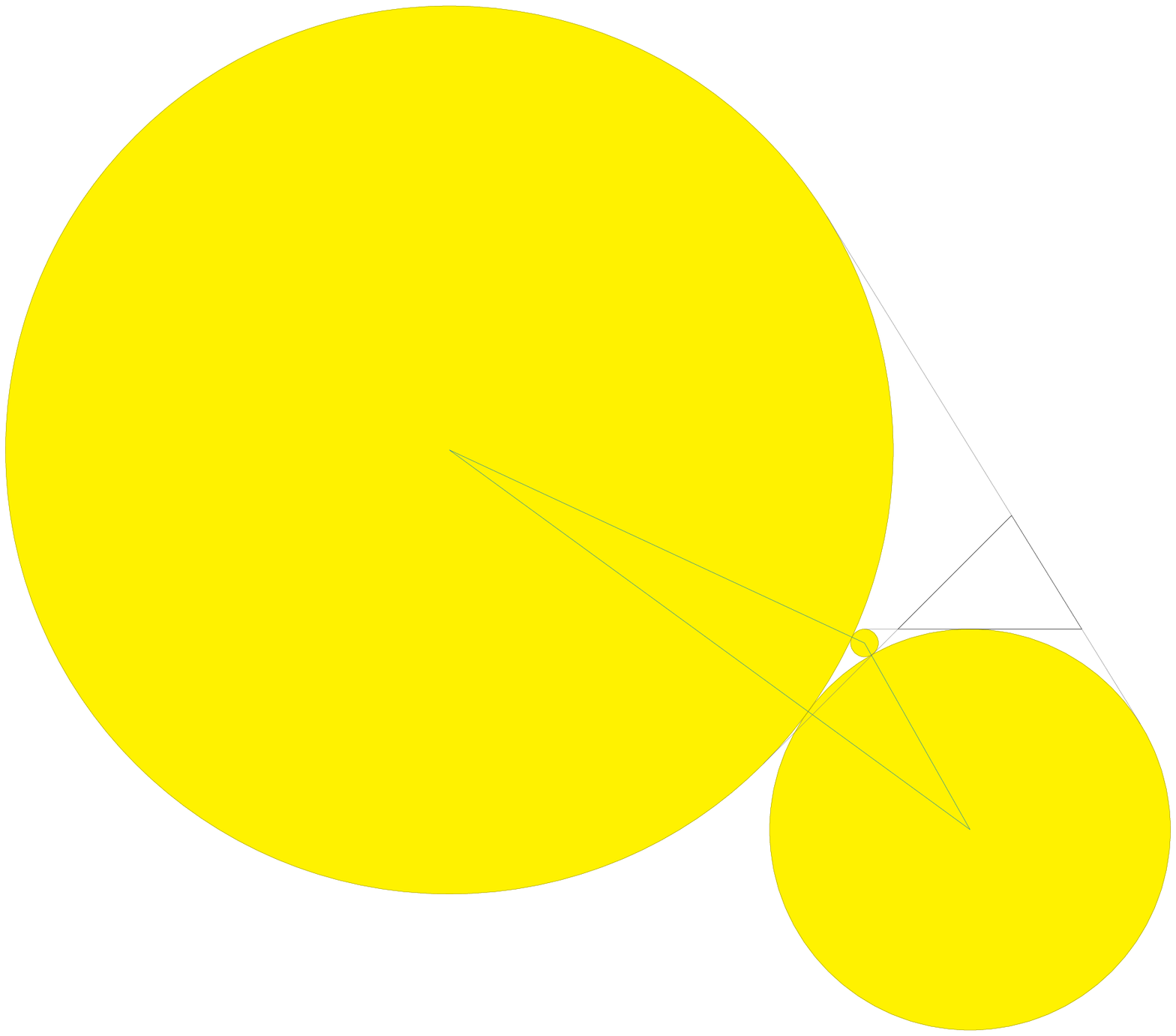}
\caption{%
\(
\protect\left\{
\protect\begin{aligned}
r_1 &=
r\sinh^2(\sigmaA-\betaA)
,\protect\\
r_2 &=
\rC\cosh^2(\sigmaA-\alphaA)
,\protect\\
r_3 &=
\rB\cosh^2\sigmaA
.
\protect\end{aligned}
\protect\right.
\)
}
\end{center}
\end{figure}
\begin{figure}
\begin{center}
\includegraphics[width=\textwidth]{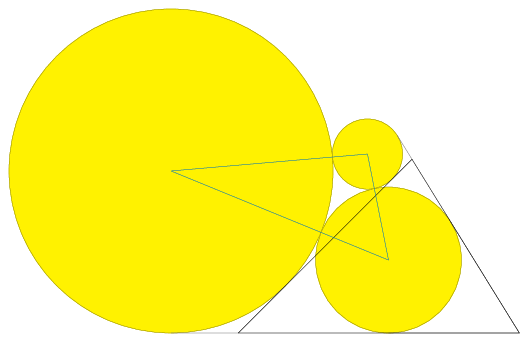}
\caption{%
\(
\protect\left\{
\protect\begin{aligned}
r_1 &=
\rC\sinh^2(\sigmaB-\alphaB)
,\protect\\
r_2 &=
r\cosh^2(\sigmaB-\betaB)
,\protect\\
r_3 &=
\rA\sinh^2(\sigmaB-\gammaB)
.
\protect\end{aligned}
\protect\right.
\)
}
\end{center}
\end{figure}
\clearpage
\begin{figure}
\begin{center}
\includegraphics[width=\textwidth]{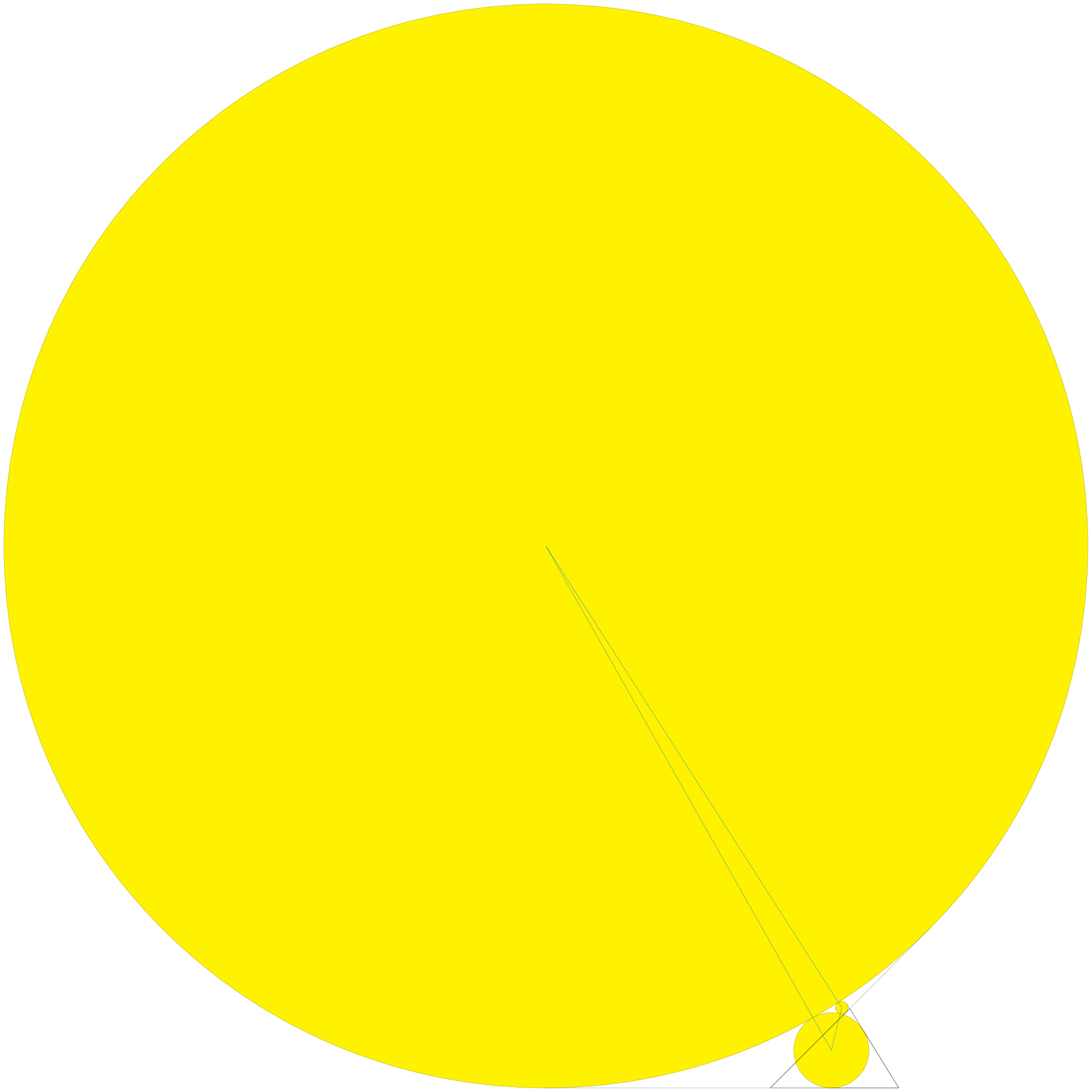}
\caption{%
\(
\protect\left\{
\protect\begin{aligned}
r_1 &=
\rC\sinh^2\sigmaB
,\protect\\
r_2 &=
r\cosh^2(\sigmaB-\gammaB)
,\protect\\
r_3 &=
\rA\sinh^2(\sigmaB-\betaB)
.
\protect\end{aligned}
\protect\right.
\)
}
\end{center}
\end{figure}
\begin{figure}
\begin{center}
\includegraphics[width=\textwidth]{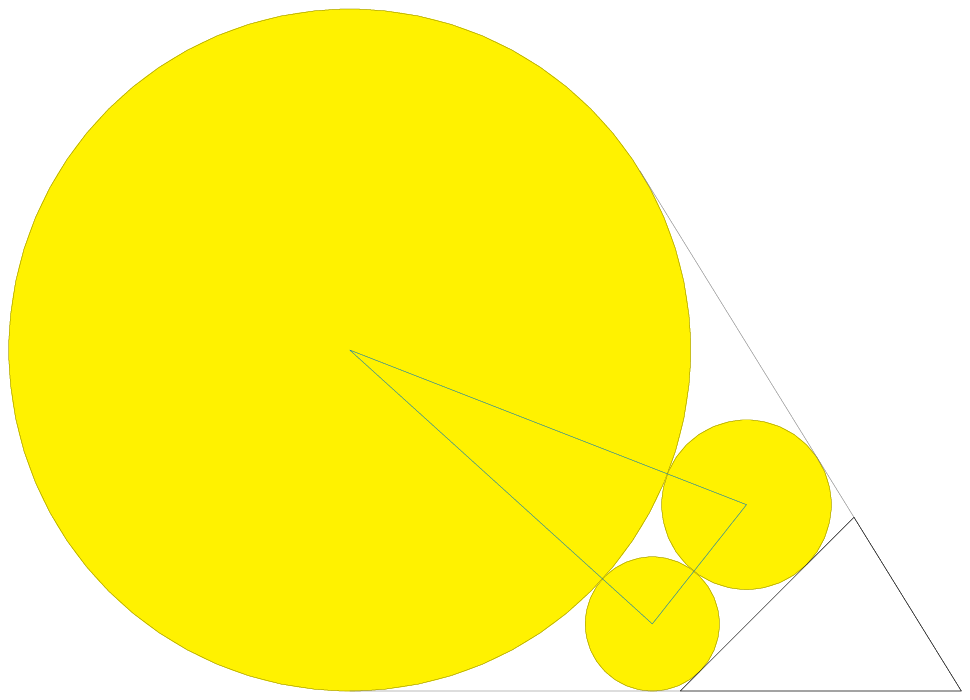}
\caption{%
\(
\protect\left\{
\protect\begin{aligned}
r_1 &=
\rC\sinh^2(\sigmaB-\gammaB)
,\protect\\
r_2 &=
r\cosh^2\sigmaB
,\protect\\
r_3 &=
\rA\sinh^2(\sigmaB-\alphaB)
.
\protect\end{aligned}
\protect\right.
\)
}
\end{center}
\end{figure}
\begin{figure}
\begin{center}
\includegraphics[width=\textwidth]{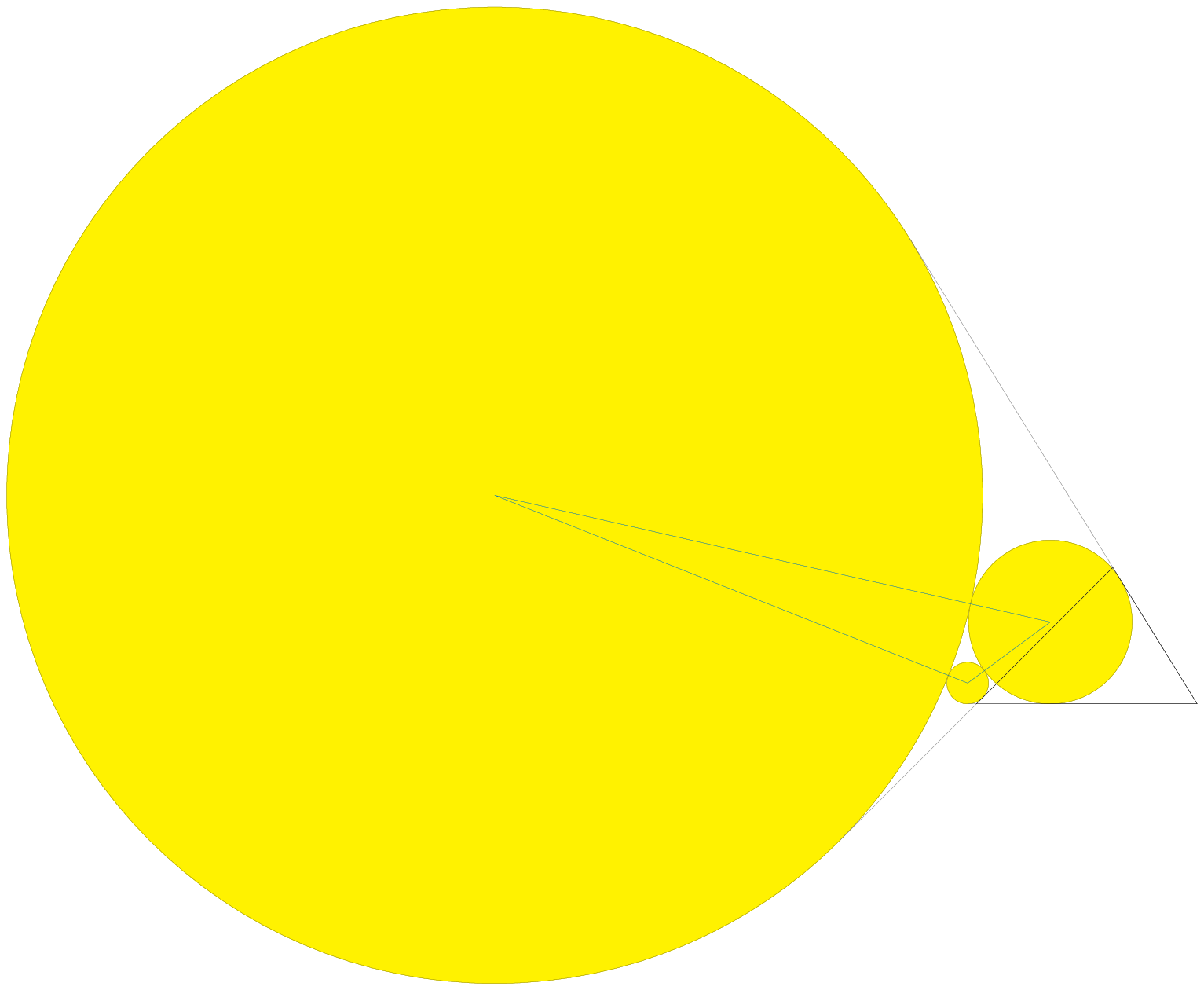}
\caption{%
\(
\protect\left\{
\protect\begin{aligned}
r_1 &=
\rC\sinh^2(\sigmaB-\betaB)
,\protect\\
r_2 &=
r\cosh^2(\sigmaB-\alphaB)
,\protect\\
r_3 &=
\rA\sinh^2\sigmaB
.
\protect\end{aligned}
\protect\right.
\)
}
\end{center}
\end{figure}
\begin{figure}
\begin{center}
\includegraphics[width=\textwidth]{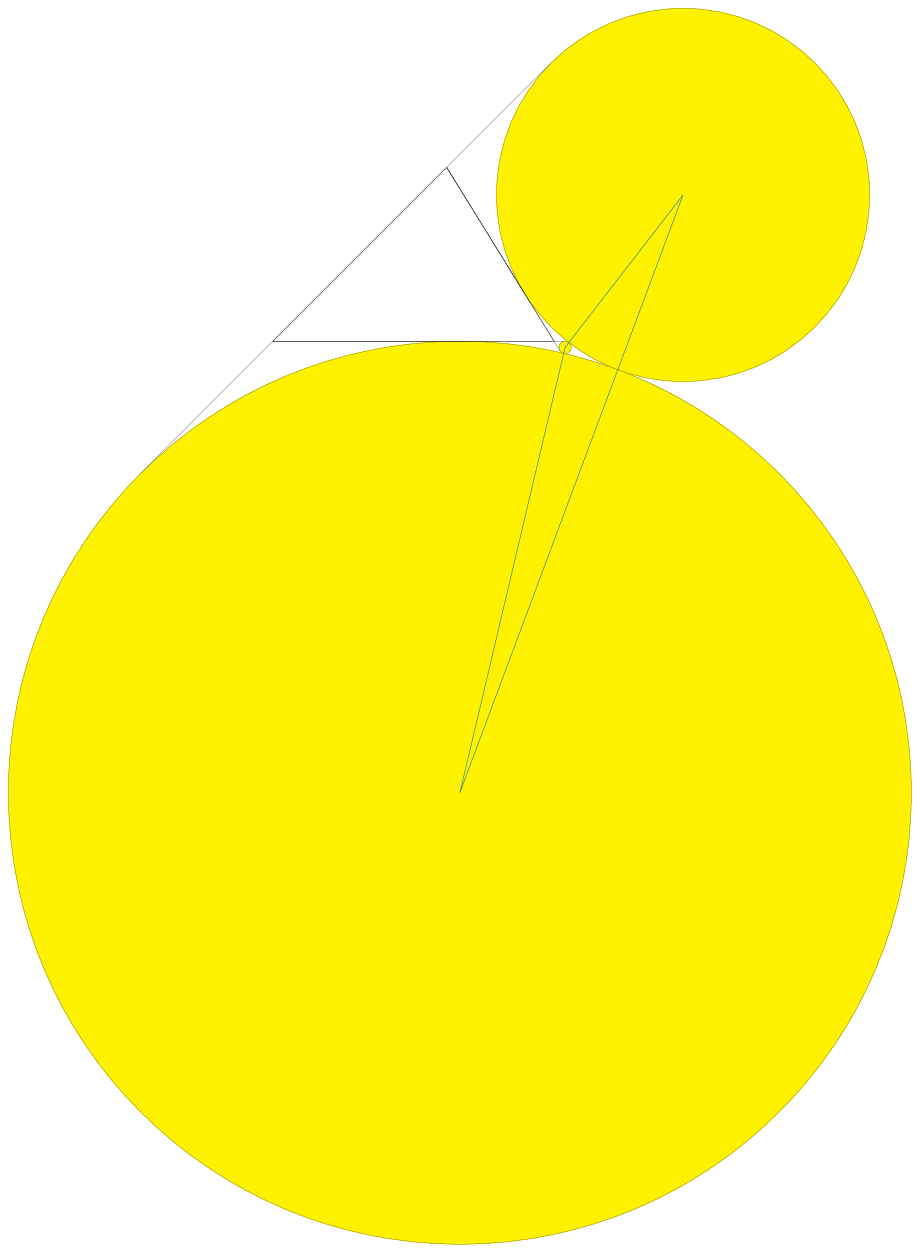}
\caption{%
\(
\protect\left\{
\protect\begin{aligned}
r_1 &=
\rC\cosh^2(\sigmaB-\alphaB)
,\protect\\
r_2 &=
r\sinh^2(\sigmaB-\betaB)
,\protect\\
r_3 &=
\rA\cosh^2(\sigmaB-\gammaB)
.
\protect\end{aligned}
\protect\right.
\)
}
\end{center}
\end{figure}
\begin{figure}
\begin{center}
\includegraphics[width=\textwidth]{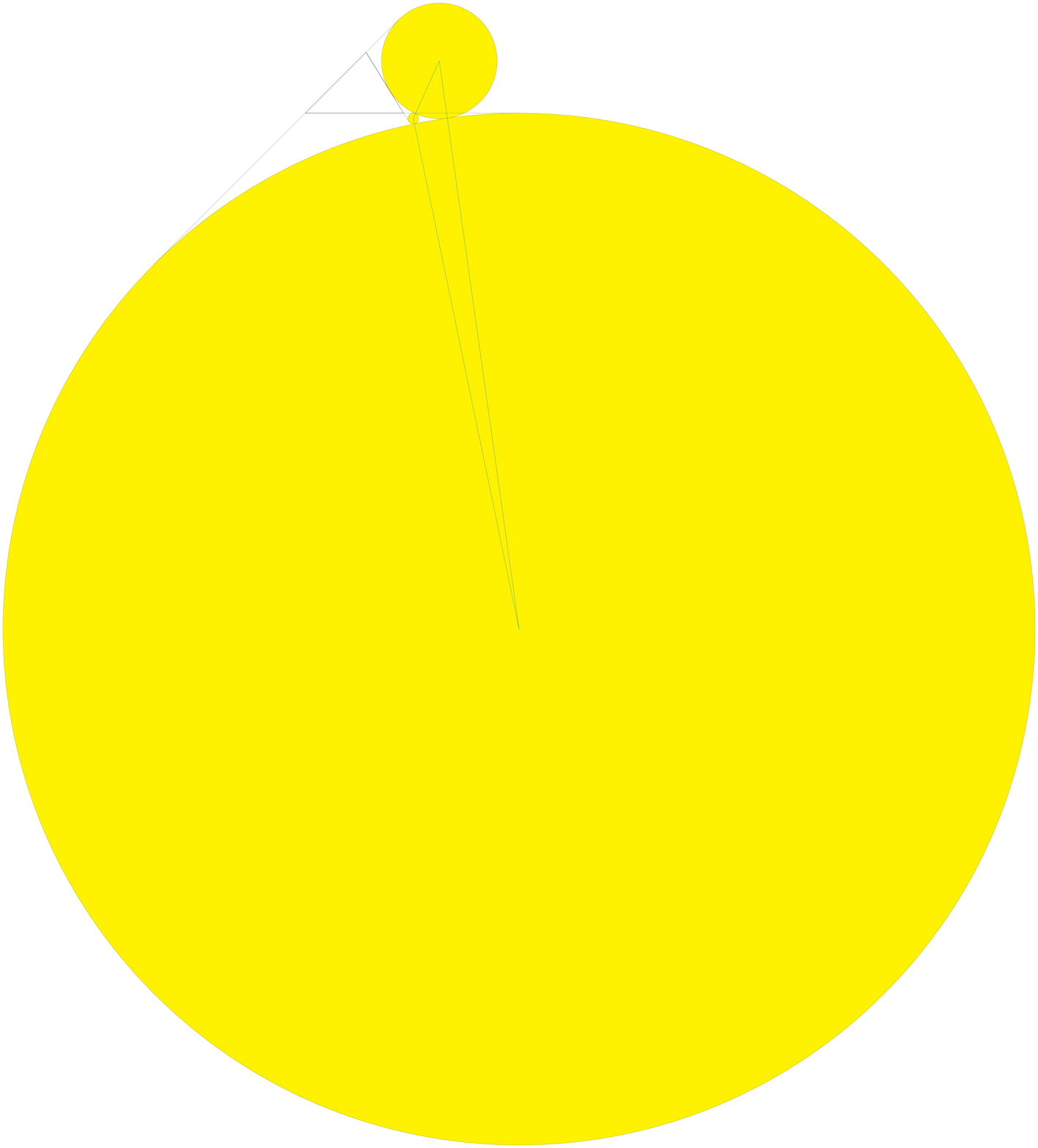}
\caption{%
\(
\protect\left\{
\protect\begin{aligned}
r_1 &=
\rC\cosh^2\sigmaB
,\protect\\
r_2 &=
r\sinh^2(\sigmaB-\gammaB)
,\protect\\
r_3 &=
\rB\cosh^2(\sigmaB-\betaB)
.
\protect\end{aligned}
\protect\right.
\)
}
\end{center}
\end{figure}
\begin{figure}
\begin{center}
\includegraphics[width=\textwidth]{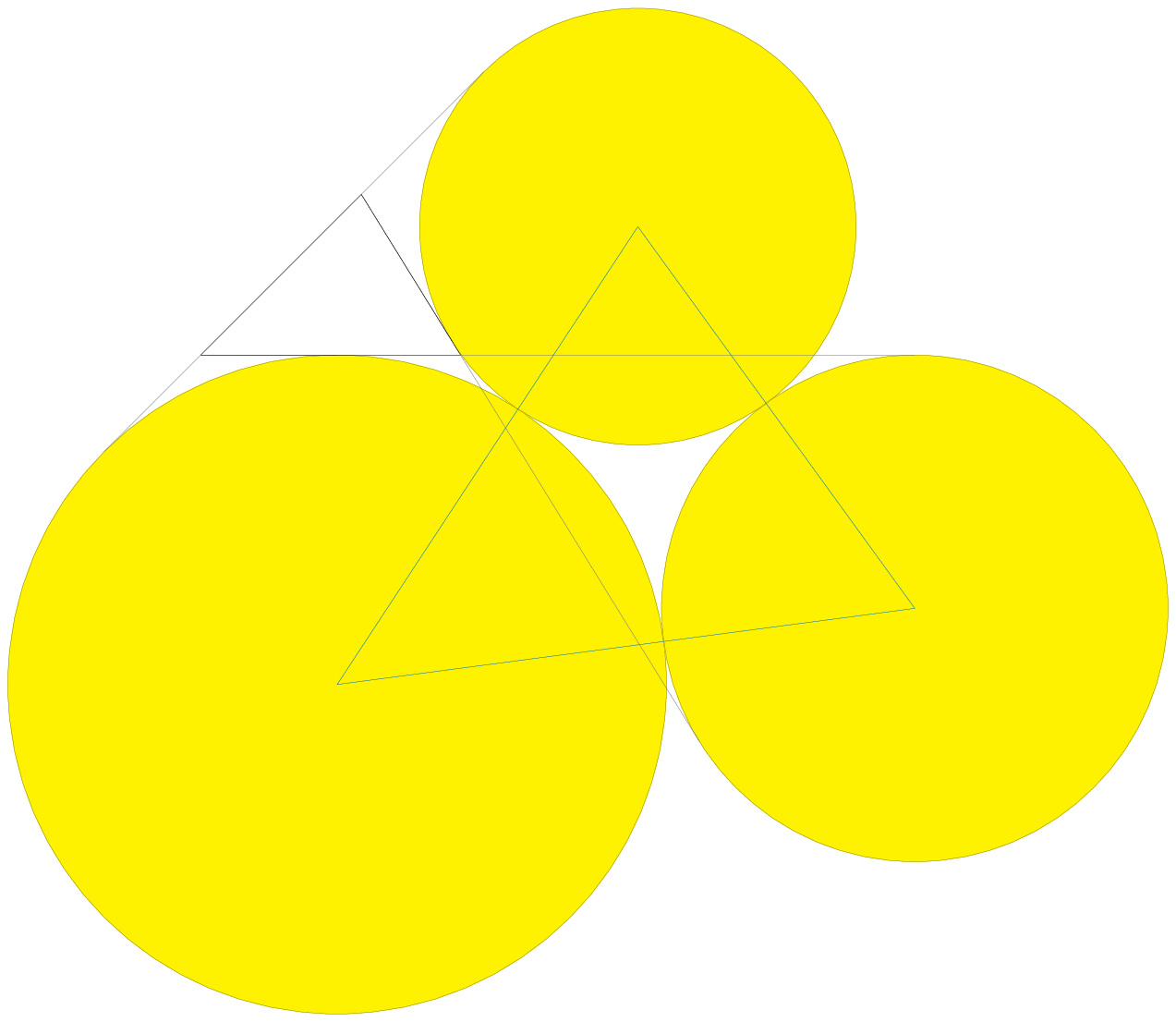}
\caption{%
\(
\protect\left\{
\protect\begin{aligned}
r_1 &=
\rC\cosh^2(\sigmaB-\gammaB)
,\protect\\
r_2 &=
r\sinh^2\sigmaB
,\protect\\
r_3 &=
\rA\cosh^2(\sigmaB-\alphaB)
.
\protect\end{aligned}
\protect\right.
\)
}
\end{center}
\end{figure}
\begin{figure}
\begin{center}
\includegraphics[width=\textwidth]{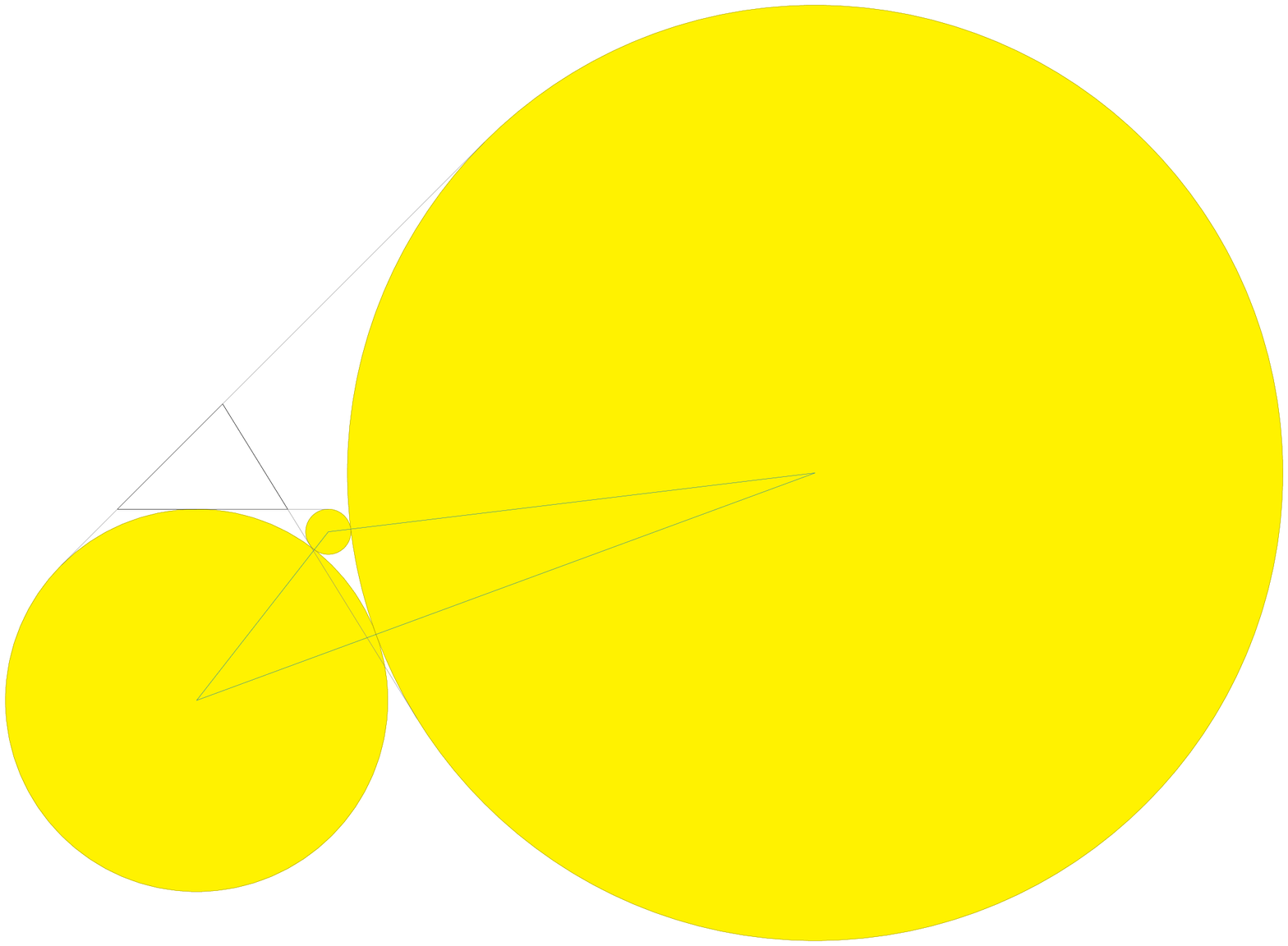}
\caption{%
\(
\protect\left\{
\protect\begin{aligned}
r_1 &=
\rC\cosh^2(\sigmaB-\betaB)
,\protect\\
r_2 &=
r\sinh^2(\sigmaB-\alphaB)
,\protect\\
r_3 &=
\rA\cosh^2\sigmaB
.
\protect\end{aligned}
\protect\right.
\)
}
\end{center}
\end{figure}
\begin{figure}
\begin{center}
\includegraphics[width=\textwidth]{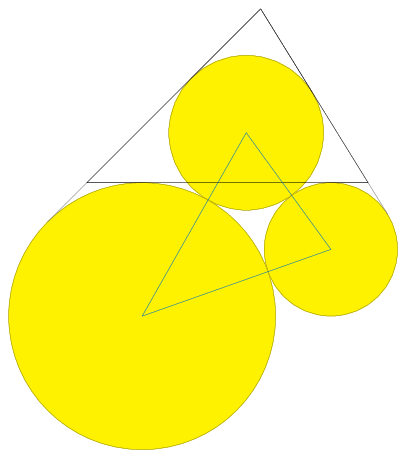}
\caption{%
\(
\protect\left\{
\protect\begin{aligned}
r_1 &=
\rB\sinh^2(\sigmaC-\alphaC)
,\protect\\
r_2 &=
\rA\sinh^2(\sigmaC-\betaC)
,\protect\\
r_3 &=
r\cosh^2(\sigmaC-\gammaC)
.
\protect\end{aligned}
\protect\right.
\)
}
\end{center}
\end{figure}
\begin{figure}
\begin{center}
\includegraphics[width=\textwidth]{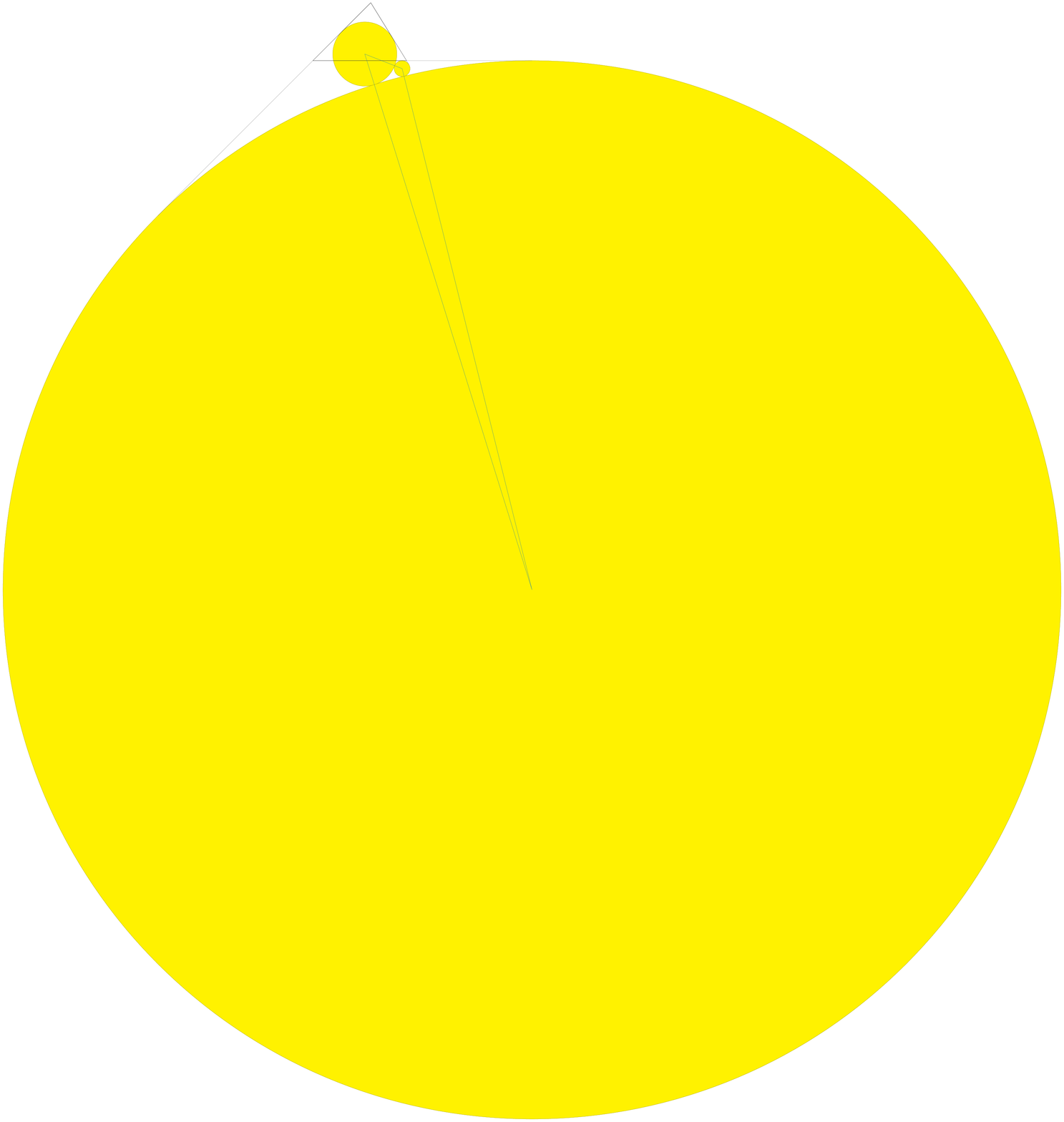}
\caption{%
\(
\protect\left\{
\protect\begin{aligned}
r_1 &=
\rB\sinh^2\sigmaC
,\protect\\
r_2 &=
\rA\sinh^2(\sigmaC-\gammaC)
,\protect\\
r_3 &=
r\cosh^2(\sigmaC-\betaC)
.
\protect\end{aligned}
\protect\right.
\)
}
\end{center}
\end{figure}
\begin{figure}
\begin{center}
\includegraphics[width=\textwidth]{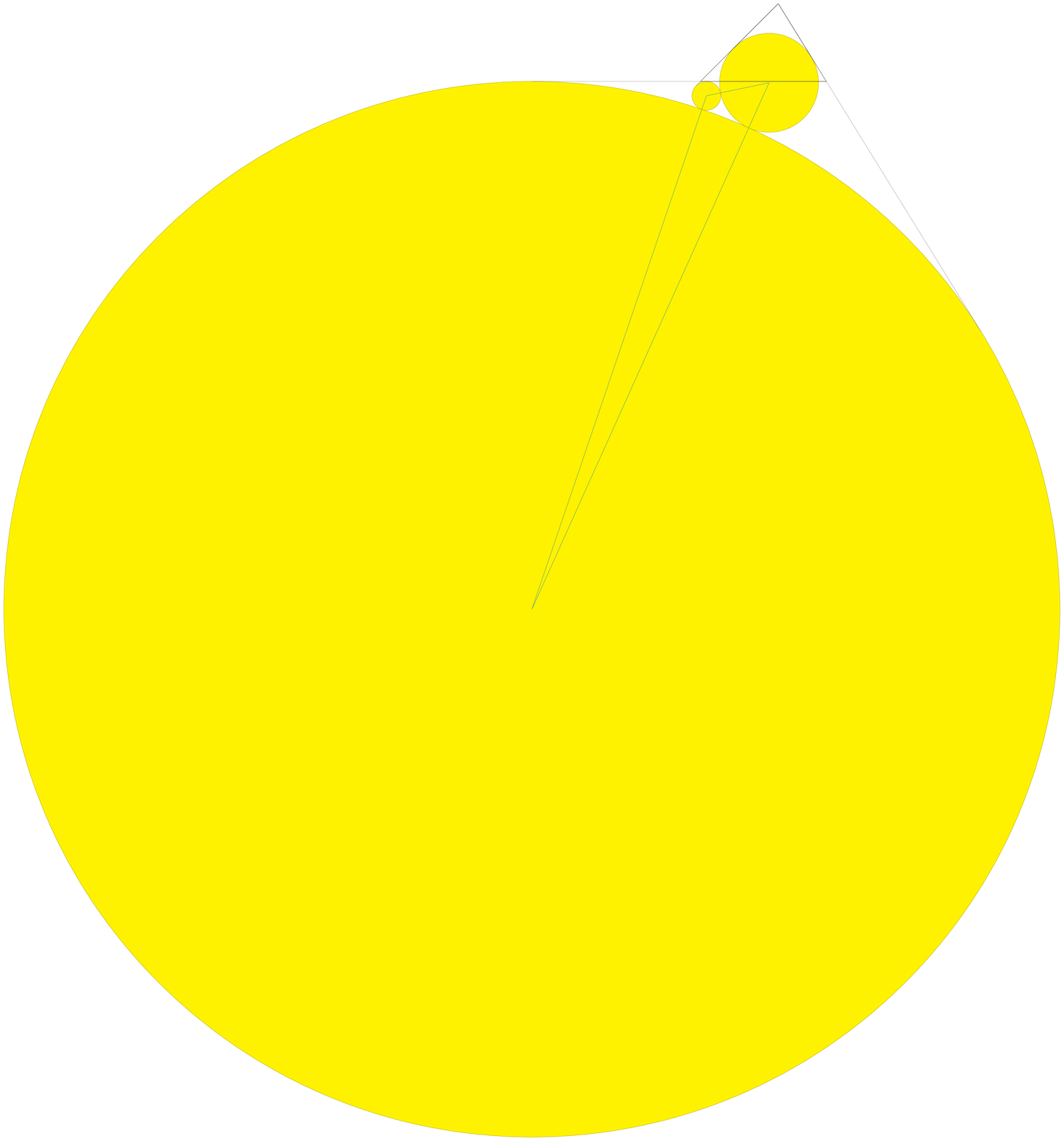}
\caption{%
\(
\protect\left\{
\protect\begin{aligned}
r_1 &=
\rB\sinh^2(\sigmaC-\gammaC)
,\protect\\
r_2 &=
\rA\sinh^2\sigmaC
,\protect\\
r_3 &=
r\cosh^2(\sigmaC-\alphaC)
.
\protect\end{aligned}
\protect\right.
\)
}
\end{center}
\end{figure}
\begin{figure}
\begin{center}
\includegraphics[width=\textwidth]{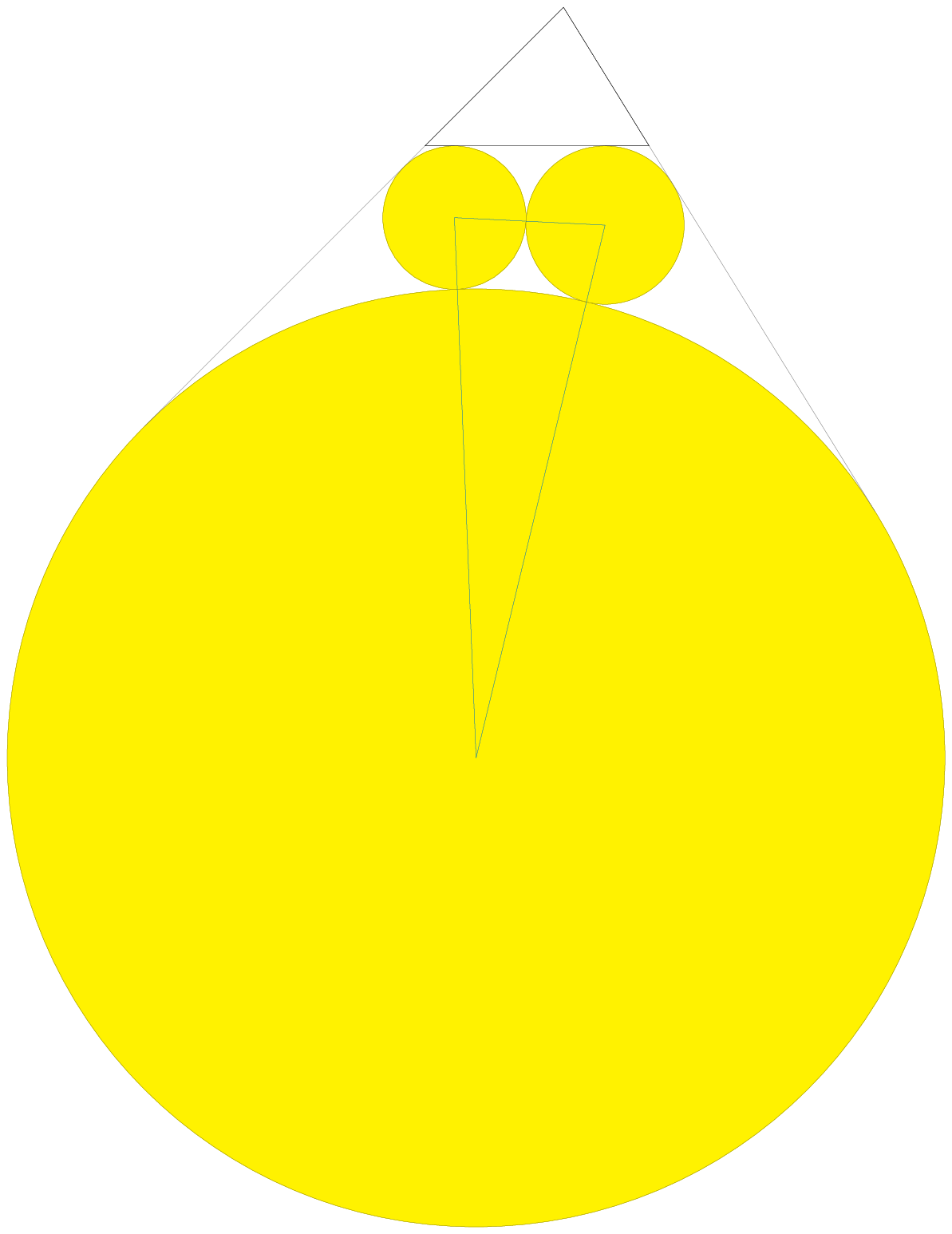}
\caption{%
\(
\protect\left\{
\protect\begin{aligned}
r_1 &=
\rB\sinh^2(\sigmaC-\betaC)
,\protect\\
r_2 &=
\rA\sinh^2(\sigmaC-\alphaC)
,\protect\\
r_3 &=
r\cosh^2\sigmaC
.
\protect\end{aligned}
\protect\right.
\)
}
\end{center}
\end{figure}
\begin{figure}
\begin{center}
\includegraphics[width=\textwidth]{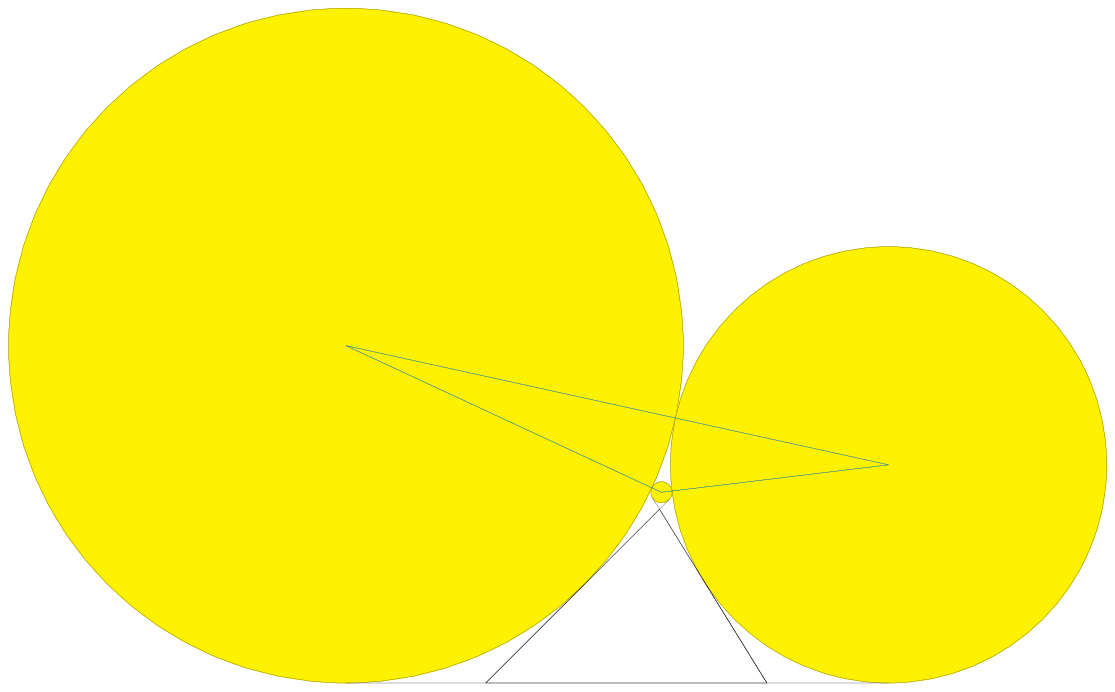}
\caption{%
\(
\protect\left\{
\protect\begin{aligned}
r_1 &=
\rB\cosh^2(\sigmaC-\alphaC)
,\protect\\
r_2 &=
\rA\cosh^2(\sigmaC-\betaC)
,\protect\\
r_3 &=
r\sinh^2(\sigmaC-\gammaC)
.
\protect\end{aligned}
\protect\right.
\)
}
\end{center}
\end{figure}
\begin{figure}
\begin{center}
\includegraphics[width=\textwidth]{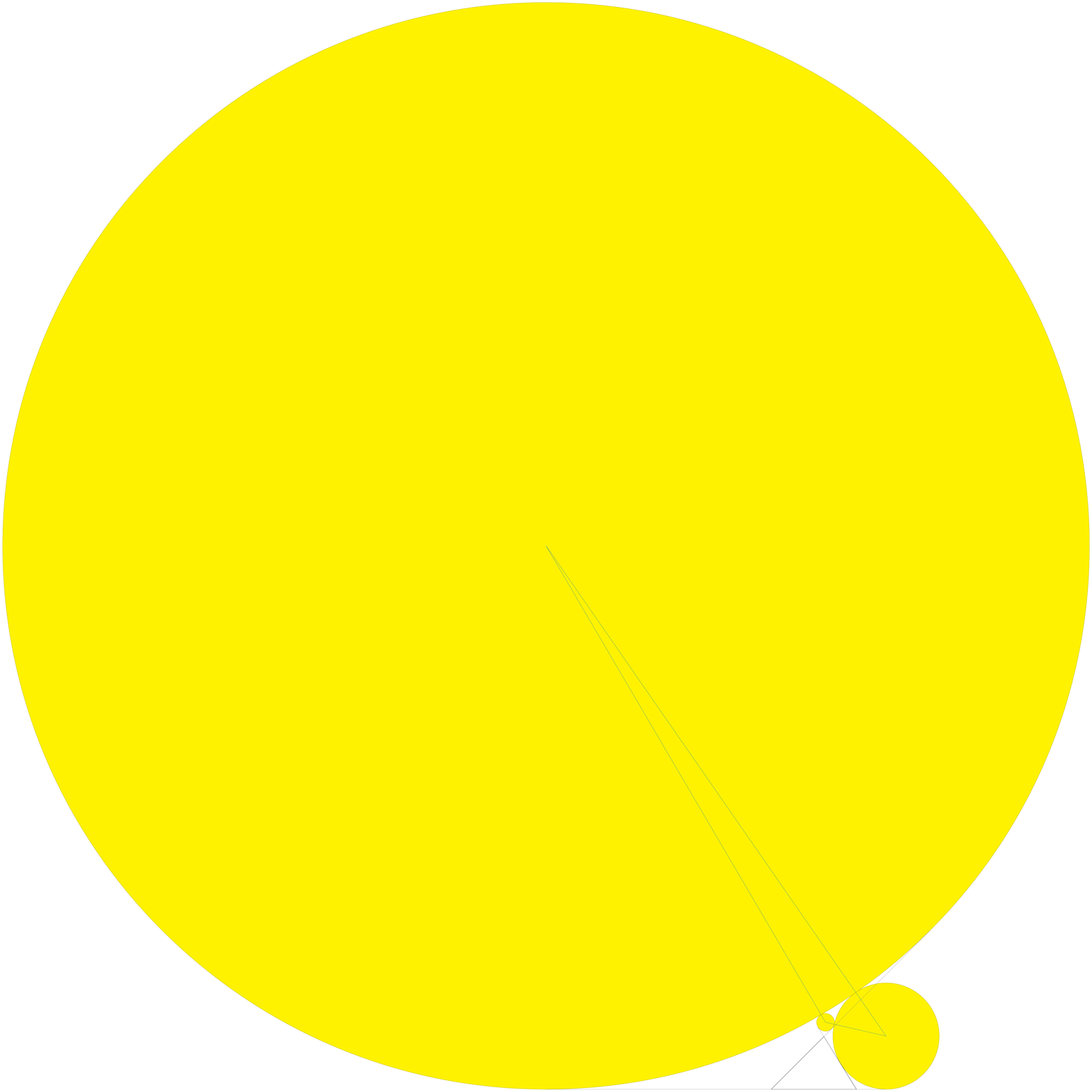}
\caption{%
\(
\protect\left\{
\protect\begin{aligned}
r_1 &=
\rB\cosh^2\sigmaC
,\protect\\
r_2 &=
\rA\cosh^2(\sigmaC-\gammaC)
,\protect\\
r_3 &=
r\sinh^2(\sigmaC-\betaC)
.
\protect\end{aligned}
\protect\right.
\)
}
\end{center}
\end{figure}
\begin{figure}
\begin{center}
\includegraphics[width=\textwidth]{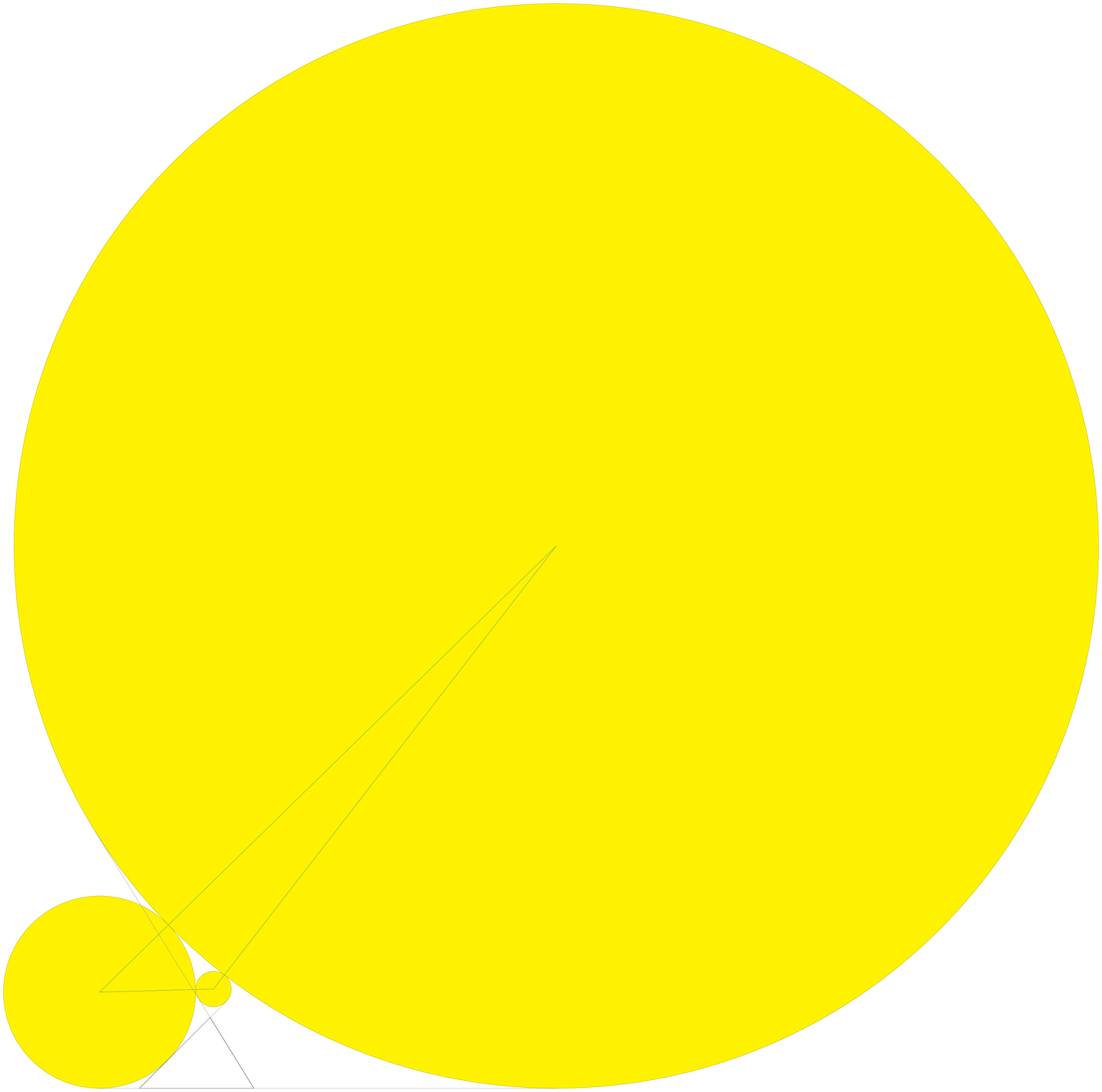}
\caption{%
\(
\protect\left\{
\protect\begin{aligned}
r_1 &=
\rB\cosh^2(\sigmaC-\gammaC)
,\protect\\
r_2 &=
\rA\cosh^2\sigmaC
,\protect\\
r_3 &=
r\sinh^2(\sigmaC-\betaC)
.
\protect\end{aligned}
\protect\right.
\)
}
\end{center}
\end{figure}
\begin{figure}
\begin{center}
\includegraphics[width=\textwidth]{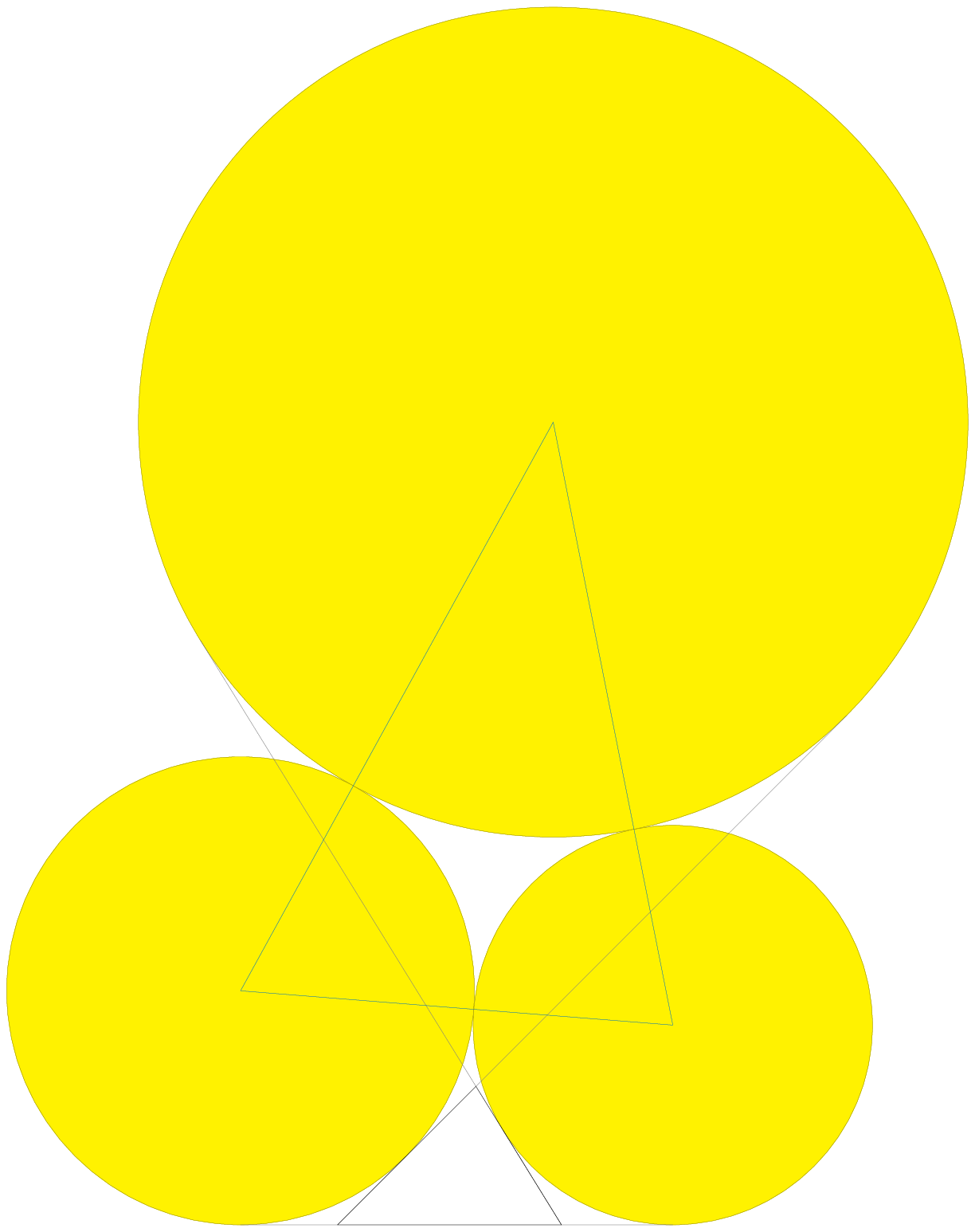}
\caption{%
\(
\protect\left\{
\protect\begin{aligned}
r_1 &=
\rB\cosh^2(\sigmaC-\betaC)
,\protect\\
r_2 &=
\rA\cosh^2(\sigmaC-\alphaC)
,\protect\\
r_3 &=
r\sinh^2\sigmaC
.
\protect\end{aligned}
\protect\right.
\)
}
\label{fig:c8}
\end{center}
\end{figure}

\end{document}